\documentclass[11pt]{amsart}

\usepackage[a4paper,margin=26mm]{geometry}
\usepackage[T1]{fontenc}
\usepackage[utf8]{inputenc}
\usepackage{lmodern}
\usepackage{microtype}
\usepackage{amsmath,amssymb,amsthm}
\usepackage{booktabs}
\usepackage{enumitem}
\usepackage{tikz}
\usetikzlibrary{arrows.meta,calc}
\usepackage[colorlinks=true,linkcolor=blue!50!black,citecolor=blue!50!black,urlcolor=blue!60!black]{hyperref}

\newif\ifbw
\bwfalse
\ifbw
  \colorlet{corefill}{black!8}
  \colorlet{extfill}{black!16}
  \colorlet{softfill}{black!10}
  \colorlet{unusedfill}{white}
  \colorlet{unusedtext}{black!65}
  \colorlet{coreline}{black}
  \colorlet{mapcolor}{black}
  \colorlet{accentcolor}{black}
\else
  \colorlet{corefill}{blue!8}
  \colorlet{extfill}{orange!10}
  \colorlet{softfill}{orange!12}
  \colorlet{unusedfill}{gray!12}
  \colorlet{unusedtext}{gray!70}
  \colorlet{coreline}{blue!45!black}
  \colorlet{mapcolor}{red!75!black}
  \colorlet{accentcolor}{blue!65!black}
\fi

\numberwithin{equation}{section}
\newtheorem{theorem}{Theorem}[section]
\newtheorem{lemma}[theorem]{Lemma}
\newtheorem{proposition}[theorem]{Proposition}
\newtheorem{corollary}[theorem]{Corollary}

\theoremstyle{definition}

\newcommand{\Aut}{\operatorname{Aut}}
\newcommand{\Sym}{\operatorname{Sym}}
\newcommand{\cG}{\mathcal G}
\newcommand{\cT}{\mathcal T}
\newcommand{\cS}{\mathcal S}

\newcommand{\dom}{\operatorname{Dom}}
\newcommand{\range}{\operatorname{Range}}

\title{EPPA for Reducts of the Random Graph}

\author[S. Brenner]{Sofia Brenner}
\address{Fakultät für Mathematik und Informatik, Universität Leipzig, Germany}
\email{sofia.brenner@uni-leipzig.de}
\author[J. Hubi\v cka]{Jan Hubi\v cka}
\address{Charles University, Faculty of Mathematics and Physics\\Department of Applied Mathematics (KAM)\\
Prague, Czech Republic}
\email{hubicka@kam.mff.cuni.cz}
\author[M. Kone\v cn\'y]{Mat\v ej Kone\v cn\'y}
\address{Charles University, Faculty of Mathematics and Physics\\Department of Applied Mathematics (KAM)\\
Prague, Czech Republic, and Institut für Algebra, TU Dresden, Germany}
\email{matej@kam.mff.cuni.cz}

\thanks{J.H. was supported by project 25-15571S of the Czech Science Foundation (GA\v CR). M. K. received funding from the ERC (Grant Agreement no. 101071674, POCOCOP). Views and opinions expressed are however those of the authors only and do not necessarily reflect those of the European Union or the European Research Council Executive Agency.}
\date{}

\begin{document}
\setlength{\emergencystretch}{3em}
\begin{abstract}
We prove that finite two-graphs have coherent EPPA and that neither graphs
modulo global complementation nor two-graphs modulo global complementation have
EPPA.  This completes the classification of EPPA and coherent EPPA for the
reducts of the random graph, answering a question and confirming a conjecture
of Evans, Hubi\v{c}ka, Kone\v{c}n\'y and Ne\v{s}et\v{r}il.
\end{abstract}

\maketitle

\section{Introduction}

A class of finite structures $\mathcal C$ has the \emph{extension property for
partial automorphisms} (\emph{EPPA}), sometimes also called the
\emph{Hrushovski property}, if for every $A\in\mathcal C$ there is
$B\in\mathcal C$ containing $A$ as an induced substructure such that every
\emph{partial automorphism} of $A$ (that is, every isomorphism between induced
substructures of $A$) extends to an automorphism of $B$.  We call such $B$
an \emph{EPPA-witness} for $A$ and write $\Aut(B)$ for its automorphism
group.

Coherent EPPA asks in addition that the extensions can be chosen compatibly
with composition.  More precisely, an EPPA-witness $B$ for $A$ is
\emph{coherent} if there is a map $\Phi$ from partial automorphisms of $A$ to
$\Aut(B)$ such that $\Phi(p)$ extends $p$ and, whenever $p\colon D\to E$ and
$q\colon E\to F$ are partial automorphisms,
\[
 \Phi(q\circ p)=\Phi(q)\Phi(p).
\]
A class has coherent EPPA if every member has a coherent EPPA-witness.  See
Siniora and Solecki~\cite{Siniora} for more background on this notion,
and~\cite{hubicka2025twenty} for a recent survey of EPPA and related classification
problems.  One motivation for coherent EPPA is that if $M$ is a countable
homogeneous relational structure whose age has coherent EPPA, then
$\Aut(M)$ contains a dense locally finite subgroup~\cite[Proposition~5.1]{Siniora}.

Hrushovski proved in 1992 that the class of all finite graphs has
EPPA~\cite{hrushovski1992}.  This was one of the ingredients used by Hodges,
Hodkinson, Lascar and Shelah in their proof of the small index property for
the random graph~\cite{hodges1993b}, and it initiated a line of work studying EPPA
for concrete classes of finite structures (see, for example,
\cite{Evans3,Hubicka2018EPPA,eppatwographs}).  Here we consider this problem for reducts of the
random graph.

Thomas~\cite{Thomas1991} classified the first-order reducts of the countable
random graph $R$ up to first-order interdefinability.  Equivalently, he
classified the closed permutation groups on $V(R)$ containing $\Aut(R)$.
There are five of them.  Recall that a two-graph is a $3$-uniform
hypergraph in which every four vertices span an even number of hyperedges.
On the level of finite structures we will use the following representatives:
\begin{enumerate}[label=\textup{(\arabic*)},itemsep=1pt,topsep=3pt,leftmargin=2.2em]
\item the class $\cG$ of all finite graphs;
\item the class $\cS$ of all finite sets;
\item the class $\cT$ of all finite two-graphs;
\item the class $\cG^{\pm}$ of finite sets equipped with an equivalence
      relation on unordered pairs with at most two classes; equivalently,
      finite graphs modulo global complementation;
\item the class $\cT^{\pm}$ of finite sets equipped with an equivalence
      relation on unordered triples with at most two classes, where each
      equivalence class is a two-graph; equivalently, finite two-graphs modulo
      global complementation.
\end{enumerate}

Evans, Hubi\v{c}ka, Kone\v{c}n\'y and Ne\v{s}et\v{r}il~\cite{eppatwographs} proved
that finite two-graphs have EPPA.  They asked whether two-graphs have coherent
EPPA and, in Remark~8.2, observed that EPPA for the last two classes in the
list above was open and conjectured that neither class has EPPA.

In this paper we answer the question and confirm the conjecture.  In
particular, we obtain the complete classification.

\begin{theorem}\label{thm:classification}
For the five reducts of the random graph, EPPA and coherent EPPA are as
follows.
\[
\begin{array}{c@{\qquad}c@{\qquad}c}
\toprule
\text{finite structures} & \text{EPPA} & \text{coherent EPPA}\\
\midrule
\cG & \text{yes} & \text{yes}\\
\cS & \text{yes} & \text{yes}\\
\cT & \text{yes} & \text{yes}\\
\cG^{\pm} & \text{no} & \text{no}\\
\cT^{\pm} & \text{no} & \text{no}\\
\bottomrule
\end{array}
\]
\end{theorem}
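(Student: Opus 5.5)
The table combines known results with two new ones, so I will settle it row by row. For $\cG$, EPPA is Hrushovski's theorem~\cite{hrushovski1992}, and coherent EPPA is due to Solecki and Siniora~\cite{Siniora}. For $\cS$, any finite set $A$ is a coherent EPPA-witness for itself: every partial bijection extends to a permutation of $A$, and we can fix a canonical extension compatible with composition. One way is to pass to $B=A\times\{0,1\}$, or to use the standard trick of extending $p\colon D\to E$ by the unique order-preserving bijection $A\setminus D\to A\setminus E$ after embedding into a larger set. For $\cT$, EPPA is~\cite{eppatwographs}, so only coherence is new there. The plan therefore has two parts: (i) coherent EPPA for two-graphs, and (ii) failure of EPPA for $\cG^{\pm}$ and $\cT^{\pm}$. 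Since coherent EPPA implies EPPA, (ii) also kills coherent EPPA.

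For (i), I would use the switching-class correspondence. A two-graph on $A$ is the same thing as a switching class of graphs on $A$. Partial automorphisms of a two-graph correspond to partial maps that are graph isomorphisms up to switching, that is, after composing with a switching by some vertex subset. I would revisit the construction of~\cite{eppatwographs}, which builds a witness from a graph-valued EPPA construction with switchings recorded as an extra $\mathbb Z_2$-valued cocycle. The goal is to choose the switching data canonically. A concrete choice: encode the two-graph by a graph on $A\times\mathbb Z_2$ (a double cover), where a two-graph isomorphism lifts to a graph isomorphism of the cover that commutes with the fibre involution. Then apply the coherent EPPA of Solecki–Siniora for graphs with a unary function or involution (the Herwig–Lascar/Hubička–Konečný–Nešetřil coherent version for structures with a fixed-point-free involution). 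Finally, take the two-graph quotient of the witness by the involution. The main obstacle is that a partial automorphism of $A$ has two lifts, and coherence requires choosing lifts functorially. I would handle this by instead lifting to the set of all pairs (partial map, switching set modulo complement) and checking that the extension map is a groupoid homomorphism. If that fails, I would use the "valuation" formulation, in which two-graphs are encoded by ternary $\mathbb Z_2$-cocycles and the coherent EPPA theorem for such cohomological structures is applied.

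For (ii), I would find a finite $A\in\cG^{\pm}$ with a partial automorphism $p$ that cannot extend in any finite $B$. The key feature is that an automorphism of $B$ either preserves or swaps the two pair-classes, and this gives a homomorphism $\Aut(B)\to\mathbb Z_2$. I would choose partial automorphisms $p$, $q$ of $A$ that swap the classes (say $p$ maps an edge to a non-edge) but whose domains force a parity contradiction. Concretely, take $p$ swapping classes on a two-element domain and $q$ preserving them, arranged so that a composition word in their extensions must fix a pair while both swapping and preserving. For example, $A$ can be a single edge $\{a,b\}$ together with a non-edge $\{b,c\}$. The swapping map $a\mapsto b, b\mapsto c$ must then send every pair in $B$ to the opposite class. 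If the extension $g$ has some finite order $n$ and an orbit of odd length, we get a contradiction: a pair inside an odd cycle of length $m$ under $g$ returns after $m$ steps to itself with colour flipped $m$ times. The rigorous version picks $A$ so that some element's orbit must be odd, for instance by adding a partial automorphism of order $3$ whose extension must commute appropriately, or by using a three-cycle $a\to b\to c\to a$ forced on unordered pairs $\{a,b\},\{b,c\},\{c,a\}$ with a colour-swap. I expect this choice of the obstructing configuration to be the main difficulty. The same parity argument then transfers to $\cT^{\pm}$ using triples, with hyperedges/non-hyperedges in place of edges/non-edges.
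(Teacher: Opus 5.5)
Your rows for $\cG$ and $\cS$ are fine. For $\cS$, extending $p\colon D\to E$ by the order-preserving bijection $A\setminus D\to A\setminus E$ is already coherent on $A$ itself. However, both genuinely new rows are left unproved, and your concrete suggestions for the negative rows cannot work.

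A complementing partial automorphism never contains a $3$-cycle. Under $a\to b\to c\to a$ the colours of $ab,bc,ca$ would have to alternate around an odd cycle, so with $ab$ an edge and $bc$ a non-edge this map is not a partial automorphism of $A$ at all. More generally, the parity constraints you derive (no odd cycles except one fixed point) already hold for every complementing partial automorphism of $A$. One can also check that any single such $p$ does extend in some finite graph: close each $p$-path into a cycle of a common length $L>2|A|$ divisible by $4$ and by all $p$-cycle lengths, and colour each pair-orbit alternately, starting from a pair of $A$ if it contains one.

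So the contradiction must come from several partial automorphisms. EPPA imposes no relation such as commuting between their extensions; the only leverage is that extensions of maps agreeing on a set $S$ differ by an element of the pointwise stabiliser $L$ of $S$. The missing idea is how to exploit this. The paper takes $p_5,p_6$ agreeing on the $4$-cycle $S=\{1,2,3,4\}$ and fixing distinct outside points $5,6$, with $15$ an edge and $16$ a non-edge. Then $g_6\in Lg_5$, and every element of this coset is complementing and hence fixes at most one vertex. The orbit-counting Lemma~\ref{lem:coset} puts $5$ and $6$ in one $L$-orbit, so some colour-preserving $\ell\in L$ with $\ell1=1$, $\ell5=6$ gives $c(15)=c(16)$, a contradiction. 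For $\cT^{\pm}$ the paper does not redo a parity argument on triples, which would also have to respect the four-point condition. Instead it reduces to $\cG^{\pm}$ by adjoining an isolated root and reading off its link (Proposition~\ref{prop:rooting}).

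For coherent EPPA of $\cT$, the double-cover route stalls exactly where you suspect, and the proposed fixes do not resolve it. First, the witness must be an antipodal cover; a graph merely carrying an involution does not quotient to a two-graph. More seriously, each two-graph partial automorphism has two lifts differing by the fibre involution. The lifted groups are central extensions $1\to\mathbb Z_2\to\widetilde{\Aut(D)}\to\Aut(D)\to1$ that need not split: there are two-graphs on $8$ points with a regular group $\mathbb Z_2\times\mathbb Z_4$ of automorphisms whose lifts generate a non-abelian group of order $16$. So functorial lifts may not exist. In that case a coherent map $\Phi$ on the cover descends to a coherent one on the quotient only if $\Phi$ sends every local fibre involution $\iota_F$ to the global involution, and no black-box coherent EPPA theorem provides this. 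Your valuation-function fallback is where the real work lies, and you do not say how coherence would be obtained there.

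The paper removes the central extension by quotienting already at the level of valuations, taking $V(C_X)=X\times\mathbb Z_2^X/\langle\mathbf1\rangle$ with the group $\Gamma_X$. Automorphisms of a full transversal $U_D\subseteq C_D$ then lift uniquely to $\Gamma_D$ (Lemma~\ref{lem:transversal}), hence homomorphically. The paper inflates this block-wise to $X$ using $|X|=\operatorname{lcm}(1,\dots,n)$ (Lemma~\ref{lem:group-extension}) and concludes with the coherentisation criterion, Lemma~\ref{lem:coherentisation}.
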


In Section~\ref{sec:complementing-graphs} we give a simple six-vertex obstruction to
EPPA for $\cG^{\pm}$.  In
Section~\ref{sec:coherent-twographs} we adapt the method of valuation
functions~\cite{hodkinson2003,eppatwographs} to prove coherent EPPA for two-graphs.  In
Section~\ref{sec:complementing-two-graphs} we show by a rooting argument that
EPPA for $\cT^{\pm}$ would imply EPPA for $\cG^{\pm}$. Finally, in Section~\ref{sec:conclusion},
we discuss EPPA for reducts of random hypergraphs, and EPPA for graphs with
loops modulo complementation.

\section{Graphs modulo global complementation}\label{sec:complementing-graphs}

We represent a structure in $\cG^{\pm}$ by choosing a graph representative
$A=(V,E)$ and writing
\[
 c_A\colon\binom V2\to\mathbb Z_2
\]
for its edge indicator; we abbreviate $\{x,y\}$ to $xy$ and call the
values of $c_A$ colours.  A partial
bijection $p\colon D\to E$ is a \emph{partial automorphism}
of the reduct if and only if it preserves equality of colours.
Equivalently, if $|D|\ge2$, there is
$\varepsilon\in\mathbb Z_2$ such that for every pair $xy\in\binom D2$ we have
\begin{equation}
 c_A(p(x)p(y))=c_A(xy)+\varepsilon.
\label{eq:pair-sign}
\end{equation}
If $|D|<2$ there is no condition.  We call
$p$ \emph{complementing} when $\varepsilon=1$.

\medskip

In the remainder of this section we will prove the following proposition:
\begin{proposition}\label{prop:a6}
Let $A_6$ be the graph shown in Figure~\ref{fig:A6} with vertex set $\{1,2,3,4,5,6\}$ and
\[
 E(A_6)=\{12,13,15,26,34,35,46\}.
\]
Put $S=\{1,2,3,4\}$ and define complementing partial automorphisms $p_5$ and $p_6$ of $A_6$ with domains $S\cup\{5\}$ and $S\cup\{6\}$ as
\[
 p_5=(1\,2\,3\,4)(5),
 \qquad
 p_6=(1\,2\,3\,4)(6).
\]
There is no finite graph modulo global complementation containing $A_6$ as an induced substructure in which each of $p_5$ and $p_6$ extends to an automorphism.
\end{proposition}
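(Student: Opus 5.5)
The plan is to argue by contradiction. Suppose $B$ is a finite structure in $\cG^{\pm}$, represented by a graph with colouring $c=c_B$, which contains $A_6$ as an induced substructure, and let $\sigma,\tau\in\Aut(B)$ extend $p_5$ and $p_6$. By \eqref{eq:pair-sign}, every automorphism $g$ of $B$ has a sign $\varepsilon(g)\in\mathbb Z_2$ with $c(g(x)g(y))=c(xy)+\varepsilon(g)$ for all pairs. Since $|B|\ge 2$, this sign is determined by $g$. The map $\varepsilon\colon\Aut(B)\to\mathbb Z_2$ is a homomorphism, and $\varepsilon(\sigma)=\varepsilon(\tau)=1$ because $p_5,p_6$ are complementing. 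The two maps $\sigma$ and $\tau$ agree on $S$, acting there as the $4$-cycle $(1\,2\,3\,4)$. Moreover $\sigma$ fixes $5$ and $\tau$ fixes $6$.

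First I would record the combinatorics of $A_6$ that the argument must use. On $S$ the colouring is the $4$-cycle $1\text{-}3$, $3\text{-}4$, $4\text{-}2$, $2\text{-}1$ of edges $12,13,34,24$, with non-edges $14,23$. The vertex $5$ is adjacent exactly to $\{1,3\}$ in $S$, and $6$ exactly to $\{2,4\}$, so $c(5i)+c(6i)=1$ for all $i\in S$. Also $c(56)=0$.

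The idea is to define a $\mathbb Z_2$-valued function on the finite group $G=\langle\sigma,\tau\rangle$ and derive two incompatible parities for it. I would consider $f(w)=c(5\,w(6))$, together with its analogue $f'(w)=c(6\,w(5))$ and the colours $c(w(5)\,i)$ for $i\in S$. Their transformation rules are
\[
f(\sigma w)=f(w)+1,\qquad f(w\tau)=f(w),
\]
and similarly for $f'$ with the roles of $\sigma$ and $\tau$ swapped. The elements $\sigma^4$ and $\tau^4$ fix $S$ pointwise and have sign $0$. The element $\tau\sigma^{-1}$ fixes $S$ pointwise and has sign $0$, so it is a genuine graph automorphism of $B$ fixing $1,2,3,4$. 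Finiteness enters through the finite orders of $\sigma$, $\tau$ and $\tau\sigma^{-1}$. Applying the transformation rules around a relation such as $\sigma^m=1$, or $(\tau\sigma^{-1})^k=1$, forces a parity on the exponent. I expect the contradiction to come from combining this with the fact that the patterns of $5$ and $6$ on $S$ are exact complements, while the $4$-cycle action of $\sigma,\tau$ on $S$ shifts each pattern by one step, which is itself a complementation.

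Concretely, I would follow the vertices $\sigma^{-1}(6)$, $\tau(5)$ and more generally the orbit of $6$ under $\langle\sigma\rangle$ and of $5$ under $\langle\tau\rangle$. For each, I would compute its colour pattern on $S$ and its colours to $5$ and $6$. For example, $\sigma(6)$ is adjacent to $\{2,4\}$ like $6$, and $\tau(5)$ is adjacent to $\{1,3\}$ like $5$, while $c(5\,\sigma(6))=c(\tau(5)\,6)=1$. The aim is a closed walk in $G$ along which the accumulated sign differs when computed in two ways. The main obstacle is identifying exactly which cycle to use. My first attempt would be the sequence $z_k=(\sigma^{-1}\tau)^k(6)$ paired with the vertex $5$. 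At each step I would use $\sigma$-invariance of $5$ and $\tau$-invariance of $6$ to express $c(5\,z_{k+1})$ in terms of $c(5\,z_k)$. I would also use the pointwise-on-$S$ behaviour to pin down how the patterns on $S$ evolve. Periodicity of $z_k$ should then contradict a nonzero net sign per step. If that fails, I would run the same bookkeeping on the four-vertex sets $\{5,6\}\cup\{i,i+1\}$ and look for an odd total.
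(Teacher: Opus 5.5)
Your proposal has a genuine gap: it stops where the proof has to happen. You never identify the closed walk, and the mechanism you hope for cannot produce a contradiction by itself.

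\begin{itemize}
\item Since $\varepsilon$ is a homomorphism, the sign accumulated around any relation in $G=\langle\sigma,\tau\rangle$ is automatically $0$. In particular $\sigma^{-1}\tau$ has sign $0$, so along $z_k=(\sigma^{-1}\tau)^k(6)$ there is no ``nonzero net sign per step'' for periodicity to contradict. The recursion does not even close: you only get $c(5\,z_{k+1})=c(\tau^{-1}(5)\,z_k)$, which brings in a new vertex.
\item For your function $f$, an inconsistency around a relation $\sigma^a w\tau^b=w$ with $a$ odd just says that the complementing map $\sigma^a$ fixes both $5$ and $w(6)$. So what you actually need is a complementing element with two fixed points, and nothing in the plan produces one.
\item All the transformation rules you list also hold in the automorphism group of the countable random graph modulo complementation, where $p_5$ and $p_6$ do extend by homogeneity. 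So the contradiction must come from finiteness of $B$. Your plan uses finiteness only through the orders of $\sigma$, $\tau$ and $\tau\sigma^{-1}$, without saying how that would help.
\end{itemize}

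The missing idea is a counting argument over a coset, which is what the paper does.

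\begin{enumerate}
\item A complementing automorphism fixes at most one vertex, since two fixed vertices give an invariant pair whose colour would have to flip. Hence $L=\Aut(B)_{(S)}$ has no complementing elements. Every element of the coset $L\sigma$ is complementing with at most one fixed point, and this coset contains $\tau$ because $\tau\sigma^{-1}\in L$.
\item $\sigma$ normalises $L$. Every $\sigma$-invariant $L$-orbit $O$ contributes exactly $|L|$ pairs $(\ell,z)\in L\times O$ with $\ell\sigma z=z$, because for each $z\in O$ the solutions $\ell$ form a coset of the stabiliser $L_z$. In total there are at most $|L|$ such pairs.
\item So all fixed points of elements of $L\sigma$ lie in a single $L$-orbit, giving $L5=L6$. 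An $\ell\in L$ with $\ell 5=6$ is non-complementing and fixes $1$, so $c(15)=c(16)$. This contradicts $15\in E(A_6)$ and $16\notin E(A_6)$.
\end{enumerate}

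This step is non-constructive, which is why hand-picking a walk is unlikely to succeed.

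A minor slip: $24\notin E(A_6)$. The set $S$ induces the self-complementary path $2\text{-}1\text{-}3\text{-}4$; a $4$-cycle could not be complemented by $(1\,2\,3\,4)$.
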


The cycle $(1\,2\,3\,4)$ reverses the pair colours on $S$, and the neighbourhoods
$\{1,3\}$ and $\{2,4\}$ of $5$ and $6$ in $S$ alternate accordingly.
Thus $p_5$ and $p_6$ are indeed complementing partial automorphisms.

Clearly, this implies the following theorem:
\begin{theorem}\label{thm:complementing-graph}
The class $\cG^{\pm}$ of finite graphs modulo global complementation does not
have EPPA.
\end{theorem}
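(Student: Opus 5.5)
The plan is a counting argument on orbit sizes. Suppose $B$ is a finite graph modulo global complementation containing $A_6$ as an induced substructure, and let $f_5,f_6\in\Aut(B)$ extend $p_5,p_6$. Since $f_5$ and $f_6$ are automorphisms whose restrictions to $S\cup\{5\}$, respectively $S\cup\{6\}$, reverse colours, each of them satisfies \eqref{eq:pair-sign} with $\varepsilon=1$ on all of $B$. So both are global anti-automorphisms of a fixed graph representative of $B$.

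\textbf{Step 1: cycle structure of an anti-automorphism.} Let $f$ be a permutation of a finite set $V$ that complements every pair colour. The colour of a pair alternates along its $f$-orbit, so every pair-orbit has even length. Three consequences follow.
\begin{enumerate}[label=\textup{(\roman*)},itemsep=1pt,topsep=3pt]
\item Two distinct fixed points would form a pair-orbit of length $1$, so $f$ has at most one fixed point.
\item If $z$ is fixed and $y$ lies in a cycle of length $L$, the pair $\{z,y\}$ has orbit length $L$. So all other cycles have even length whenever a fixed point exists.
\item In a cycle of even length $L$, the pair $\{y,f^{L/2}y\}$ has orbit length $L/2$. Hence $L\equiv 0\pmod 4$.
\end{enumerate}
So for any $f$-invariant subset $X\subseteq V$, we get $|X|\equiv 1\pmod 4$ if $X$ contains a fixed point of $f$, and $|X|\equiv 0\pmod 4$ otherwise.

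\textbf{Step 2: a common invariant set.} For a vertex $v$ of $B$ outside $S$, define its pattern $\pi(v)=(c(v1),c(v2),c(v3),c(v4))\in\mathbb Z_2^4$. Both $f_5$ and $f_6$ act on $S$ as the cycle $\sigma=(1\,2\,3\,4)$ and complement colours. Therefore $\pi(f(v))_{\sigma(i)}=\pi(v)_i+1$ for $f\in\{f_5,f_6\}$; call this map on patterns $T$, and note that $T$ is the same for both. A direct check shows that $T$ fixes $(1,0,1,0)$, which is the pattern of $5$. Let
\[
 X=\{v\in V(B)\setminus S:\pi(v)=(1,0,1,0)\}.
\]
Then $X$ is invariant under both $f_5$ and $f_6$. (If one prefers not to exclude $S$ by hand: $S$ itself is invariant under both maps, so it can simply be left out.) Moreover $5\in X$, while $6\notin X$ because $\pi(6)=(0,1,0,1)$.

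\textbf{Step 3: contradiction.} The map $f_5$ restricted to $X$ fixes $5\in X$, so Step 1 gives $|X|\equiv1\pmod4$. The map $f_6$ restricted to $X$ has no fixed point in $X$, since its only fixed point is $6\notin X$; so Step 1 gives $|X|\equiv 0\pmod 4$. These are incompatible, which proves Proposition~\ref{prop:a6}.

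The main point needing care is Step 1. One must verify the orbit-length parities cleanly, including that the pair orbit of $\{y,f^{L/2}y\}$ has length exactly $L/2$. One must also confirm that "automorphism of the reduct" really forces a uniform $\varepsilon$ on all of $B$, which holds because $|V(B)|\ge 2$.
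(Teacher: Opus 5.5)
Your argument is correct, and it takes a genuinely different, more elementary route to the paper's obstruction $A_6$, $p_5$, $p_6$.

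\textbf{The paper's route.} The paper works with the pointwise stabiliser $L=\Aut(B)_{(S)}$. Since $g_6\in Lg_5$ and every element of that coset is complementing, each fixes at most one vertex. The twisted Burnside count of Lemma~\ref{lem:coset} then puts $5$ and $6$ in a single $L$-orbit. A colour-preserving $\ell\in L$ fixing $1$ with $\ell5=6$ then contradicts $c_B(15)\ne c_B(16)$.

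\textbf{Your route.} You never form $L$ or use normalisers. You extract arithmetic from each complementing automorphism separately: a fixed point forces every other cycle to have length divisible by $4$. You then compare $f_5$ and $f_6$ on the common invariant set $X$ and obtain $|X|\equiv1$ and $|X|\equiv0\pmod 4$. Your use of $\pi(5)\ne\pi(6)$ plays the role of the paper's $c_B(15)\ne c_B(16)$.

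\textbf{What each buys.} Yours is fully self-contained: two automorphisms and a congruence, exploiting the rigid arithmetic of two colours on pairs. The paper's lemma needs only an upper bound $b$ on the number of fixed points of elements of the coset, with no cycle-length analysis. That is exactly what allows it to be reused verbatim in Proposition~\ref{prop:bounded-equivalence} with $b=r-1$ for $r$-subsets and $m$ classes. There, adapting your congruence argument would require a separate analysis of colour-shifting permutations with up to $r-1$ fixed points.

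\textbf{Two small points.}
\begin{enumerate}
\item The general summary at the end of your Step~1 is not fully justified by (i)--(iii) when $f$ has no fixed point: odd cycles of length $\ge3$ are not excluded there. They can be excluded via the pair $\{y,fy\}$, whose orbit then has odd length. You never need that case, though, since $f_5$ and $f_6$ fix $5$ and $6$, so (ii) applies to each globally.
\item You should state that the representative of $B$ is chosen to restrict to $A_6$ itself. Otherwise the patterns of $5$ and $6$ are swapped, which is harmless.
\end{enumerate}
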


\begin{figure}[ht]
\centering
\begin{minipage}{0.43\textwidth}
\centering
\begin{tikzpicture}[scale=.9,
  v/.style={circle,draw,inner sep=1.5pt,minimum size=18pt,font=\small},
  core/.style={v,fill=corefill}, ext/.style={v,fill=extfill},
  corebox/.style={rounded corners,draw=coreline,dashed}]
\node[core] (1) at (0,2) {$1$};
\node[core] (2) at (2,2) {$2$};
\node[core] (3) at (2,0) {$3$};
\node[core] (4) at (0,0) {$4$};
\node[ext]  (5) at (-1.6,1) {$5$};
\node[ext]  (6) at (3.6,1) {$6$};
\draw[corebox] (-.42,-.42) rectangle (2.42,2.42);
\draw (1)--(2); \draw (1)--(3); \draw (1)--(5);
\draw (2)--(6); \draw (3)--(4); \draw (3)--(5); \draw (4)--(6);
\end{tikzpicture}
\par\smallskip\small $A_6$; the boxed vertices form $S$.
\end{minipage}
\hfill
\begin{minipage}{0.52\textwidth}
\centering
\begin{tikzpicture}[scale=.78,
  v/.style={circle,draw,inner sep=1.2pt,minimum size=16pt,font=\small},
  arr/.style={-{Latex[length=2mm]},line width=.85pt,mapcolor}]
\node[v,fill=softfill] (5) at (-1.25,1) {$5$};
\node[v,fill=corefill] (1) at (0,2) {$1$};
\node[v,fill=corefill] (2) at (2,2) {$2$};
\node[v,fill=corefill] (3) at (2,0) {$3$};
\node[v,fill=corefill] (4) at (0,0) {$4$};
\draw[arr] (1)--(2); \draw[arr] (2)--(3); \draw[arr] (3)--(4); \draw[arr] (4)--(1);
\draw[arr] (5) to[out=130,in=230,looseness=7] (5);
\node at (.4,-.65) {$p_5$};
\begin{scope}[xshift=3.75cm]
\node[v,fill=corefill] (1b) at (0,2) {$1$};
\node[v,fill=corefill] (2b) at (2,2) {$2$};
\node[v,fill=corefill] (3b) at (2,0) {$3$};
\node[v,fill=corefill] (4b) at (0,0) {$4$};
\node[v,fill=softfill] (6b) at (2.9,1) {$6$};
\draw[arr] (1b)--(2b); \draw[arr] (2b)--(3b); \draw[arr] (3b)--(4b); \draw[arr] (4b)--(1b);
\draw[arr] (6b) to[out=50,in=-50,looseness=7] (6b);
\node at (1.25,-.65) {$p_6$};
\end{scope}
\end{tikzpicture}
\par\smallskip\small The two complementing partial automorphisms.
\end{minipage}
\caption{The six-vertex obstruction.  In particular $15$ is an edge and
$16$ is a non-edge.}
\label{fig:A6}
\end{figure}

\medskip

We will need the following simple lemma. 
We use left actions, so $(gh)x=g(hx)$.  For a set $\Omega$, write
$\Sym(\Omega)$ for its symmetric group.  If $L\le\Sym(\Omega)$ and
$x\in\Omega$, put $Lx=\{\ell x:\ell\in L\}$.  We say that $g$
\emph{normalises} $L$ if $gLg^{-1}=L$.

\begin{lemma}\label{lem:coset}
Let $\Omega$ be finite, let $b$ be a nonnegative integer, let
$L\le\Sym(\Omega)$, and let $g\in\Sym(\Omega)$ normalise $L$.
If every element of $Lg$ fixes at
most $b$ points of $\Omega$, then all points fixed by elements of $Lg$
lie in at most $b$ orbits of $L$.
\end{lemma}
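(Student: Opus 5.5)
Let $F$ be the set of points of $\Omega$ fixed by some element of $Lg$. The plan is to show that $F$ is a union of $L$-orbits, and that if $F$ met more than $b$ orbits of $L$, then some single element of $Lg$ would fix more than $b$ points.

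\emph{Step 1: $F$ is $L$-invariant.} Suppose $\ell g x=x$ with $\ell\in L$, and let $m\in L$. Then
\[
 m\ell g m^{-1}(mx)=mx .
\]
Since $g$ normalises $L$, we have $gm^{-1}=m'g$ with $m'=gm^{-1}g^{-1}\in L$. Hence $m\ell gm^{-1}=m\ell m'g\in Lg$, and this element fixes $mx$. So $F$ is a union of $L$-orbits.

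\emph{Step 2: choose orbit representatives.} Suppose for contradiction that $F$ meets at least $b+1$ distinct $L$-orbits $O_0,\dots,O_b$. Pick $x_i\in O_i$ and $\ell_i\in L$ with $\ell_i g x_i=x_i$.

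\emph{Step 3 (main obstacle): one element of $Lg$ fixing a point in each $O_i$.} The naive attempt is to conjugate each $\ell_i g$ by elements of $L$, which moves the fixed point $x_i$ within $O_i$. But conjugation changes the element, so the $b+1$ fixed points need not belong to a common element of $Lg$. We therefore need a counting argument.

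Work with the action of $L$ on the coset $Lg$ by conjugation; by Step 1 this is well defined. For $h\in Lg$ write $\mathrm{fix}(h)$ for its number of fixed points. Count the set
\[
 P=\{(h,x): h\in Lg,\ hx=x\}
\]
in two ways.

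First, for each orbit $O$ contained in $F$, consider the point $x_i\in O$ and the set of $h\in Lg$ fixing $x_i$. This is a coset of the stabiliser $L_{x_i}$: indeed, $h=\ell g$ fixes $x_i$ if and only if $\ell g x_i=x_i$, that is, $\ell\in \ell_i L_{gx_i}$, and $L_{gx_i}=L_{x_i}$ here because $gx_i=\ell_i^{-1}x_i$ lies in the same orbit as $x_i$. So exactly $|L_x|$ elements of $Lg$ fix each $x\in O$. Summing over $O$ gives $|O|\,|L_x|=|L|$ pairs from $O$. Hence
\[
 |P|\ \ge\ (b+1)|L|.
\]
Second, $|P|=\sum_{h\in Lg}\mathrm{fix}(h)\le b\,|Lg|=b|L|$. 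This contradiction proves the lemma, so the averaging argument, which is Burnside-type counting, replaces the need to find a single element directly.

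Before writing this up, we check the key fact that for $x\in F$ the number of $\ell\in L$ with $\ell gx=x$ is exactly $|L_x|$. The set of such $\ell$ is nonempty precisely because $x\in F$, using Step 1 for the points $x\in O$ other than $x_i$. It is then a left coset of $L_{gx}$, and $|L_{gx}|=|L_x|$ since $gx$ lies in the $L$-orbit of $x$. This check is the step we would verify most carefully.
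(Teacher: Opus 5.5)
Your proof is correct and follows the paper's argument: double-count the incidences $(\ell g,x)$ with $\ell g x=x$, show that each $L$-orbit containing such a fixed point contributes exactly $|O|\,|L_x|=|L|$ incidences, and compare with the upper bound $b|L|$. The only difference is that you get a suitable $\ell$ for every point of the orbit from $L$-invariance of the fixed-point set, whereas the paper gets it from $gO=O$. One small slip: in Step 3 the stabilisers $L_{gx_i}$ and $L_{x_i}$ are in general conjugate, not equal, but only $|L_{gx_i}|=|L_{x_i}|$ is needed, as you correctly say at the end.
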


\begin{proof}
Let
\[
 I=\{(\ell,z)\in L\times\Omega:(\ell g)z=z\}.
\]
Each $\ell\in L$ contributes at most $b$ incidences, so $|I|\le b|L|$.

Suppose that an $L$-orbit $O$ contains a point $z$ fixed by some element
$\ell g\in Lg$.  Then $gz=\ell^{-1}z\in O$.
Since $g$ normalises $L$, it permutes the $L$-orbits, and consequently
$gO=O$.

For any $g$-invariant $L$-orbit $O$ and any $z\in O$, there is
$\ell_z\in L$ with $\ell_z(gz)=z$.  The set of all
$\ell\in L$ satisfying $\ell(gz)=z$ is precisely $L_z\ell_z$,
where $L_z$ is the stabiliser of $z$ in $L$; it therefore has size
$|L_z|$.  Fix $z_0\in O$.  Since $L$ acts transitively on $O$, every
$L_z$ ($z\in O$) has size $\frac{|L|}{|O|}$.  Counting incidences by their
second coordinate, we obtain
\[
\begin{aligned}
 |I\cap(L\times O)|
 &=\sum_{z\in O}|\{\ell\in L:(\ell g)z=z\}|\\
 &=\sum_{z\in O}|L_z|=|O|\,|L_{z_0}|=|L|.
\end{aligned}
\]
Since the incidences coming from different $L$-orbits are disjoint and
$|I|\le b|L|$, at most $b$ such orbits can occur.
\end{proof}

\begin{proof}[Proof of Proposition~\ref{prop:a6}]
Suppose $B\in\cG^{\pm}$ contains $A_6$ and both $p_5,p_6$ extend
to automorphisms $g_5,g_6\in\Aut(B)$.  Since $A_6$ realises both
equivalence classes on unordered pairs, so does $B$. 
The extensions $g_5,g_6$ are necessarily complementing.

Put $L=\Aut(B)_{(S)}$, the pointwise stabiliser of $S$ in $\Aut(B)$. Since
every complementing automorphism fixes at most one vertex (two fixed vertices
would give an invariant unordered pair whose colour would have to be swapped),
no element of $L$ is complementing.

Since $g_5$ permutes $S$ setwise, it normalises $L$.  As
$g_5$ and $g_6$ agree on $S$, we have that
\[
 d=g_6g_5^{-1}\in L,
 \qquad g_6=dg_5\in Lg_5.
\]
Every element of $Lg_5$ is complementing and fixes at most one point, but $g_5$
fixes $5$ and $g_6$ fixes $6$.  Lemma~\ref{lem:coset} gives $L5=L6$.
Choose $\ell\in L$ with $\ell5=6$.  Then $\ell$ is not complementing and fixes $1$, so
\[
 c_B(15)=c_B((\ell1)(\ell5))=c_B(16),
\]
a contradiction, since $15$ is an edge of $A_6$ while $16$ is not.
\end{proof}

\section{Coherent EPPA for two-graphs}\label{sec:coherent-twographs}

We now turn to two-graphs.  A \emph{two-graph} is a 3-uniform hypergraph $(V,T)$ 
such that every four-element subset of $V$ contains an even number of hyperedges.
Equivalently, for a graph $G$ on $V$, let $T(G)$ consist of the triples
with an odd number of edges; then $(V,T(G))$ is a two-graph.  In fact, every
two-graph is of this form, and $T(G)$ depends only on the
\emph{Seidel-switching class} of $G$.

Our proof uses the method of \emph{valuation functions}, introduced by
Hodkinson and Otto~\cite{hodkinson2003} and developed further in
\cite{Evans3,Hubicka2018metricEPPA,Hubicka2018EPPA,eppatwographs,HubickaSemigeneric}; see also~\cite{hubicka2025twenty}. To recall the construction in~\cite{eppatwographs}, let $X$ be a finite nonempty set and consider the \emph{antipodal metric space of diameter 3} on
$V_X=X\times\mathbb Z_2^X$ defined below.  We call $x$ the \emph{projection} of $(x,\chi)$ and
$\chi\colon X\to\mathbb Z_2$ its \emph{valuation function}.  The metric
$d\colon V_X^2\to\{0,1,2,3\}$ is defined as follows (where $\mathbf1$ denotes the constant-one function):
\[
d((x,\chi), (y,\xi)) = \begin{cases}
  0 &\text{ if } (x,\chi) = (y,\xi)\\
  3 &\text{ if } (x,\chi) = (y,\mathbf{1}-\xi)\\
  1 &\text{ if } (x,\chi) \notin \{(y,\xi),(y,\mathbf{1}-\xi)\}\text{ and }\chi(y)+\xi(x) = 0\\
  2 &\text{ if } (x,\chi) \notin \{(y,\xi),(y,\mathbf{1}-\xi)\}\text{ and }\chi(y)+\xi(x) = 1.
\end{cases}
\]
The unordered pairs at distance~3 form the vertices of a two-graph:
three such antipodal pairs form a hyperedge if and only if the distance-1
graph on their six endpoints is a $6$-cycle.  In~\cite{eppatwographs}, a two-graph
is first represented as an antipodal metric space of diameter~3, an
EPPA-witness is constructed there, and its antipodal pairs are then
identified to recover a two-graph.

We make two adjustments to the construction in~\cite{eppatwographs}:
\begin{enumerate}
  \item Instead of working with antipodal metric spaces of diameter 3, we quotient already on the level of valuation functions. This makes it possible to choose extensions such that we obtain a homomorphism from the automorphism group of the input two-graph to the automorphism group of its EPPA-witness.
  \item Here $X$ is chosen much larger: its size is divisible by the size of every induced substructure of the input.  This allows us to extend the automorphism group of each such substructure homomorphically by repeating its action on blocks, and then obtain coherence from the criterion below.
\end{enumerate}

The following lemma is a convenient form of \cite[Lemma~4.17]{BradleyEPPAnumbers}.

\begin{lemma}\label{lem:coherentisation}
Let $A\subseteq B$ be finite structures.  Assume
\begin{enumerate}
\item every partial automorphism of $A$ extends to an automorphism of $B$;
\item for every induced $D\subseteq A$ there is a homomorphism
\[
 \rho_D\colon\Aut(D)\longrightarrow\Aut(B)
\]
extending the natural action on $D$.
\end{enumerate}
Then $B$ is a coherent EPPA-witness for $A$.
\end{lemma}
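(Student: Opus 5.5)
We must construct $\Phi$ from the given data. The natural idea: for a partial automorphism $p\colon D\to E$ of $A$, we need $\Phi(p)$, and compatibility $\Phi(q\circ p)=\Phi(q)\Phi(p)$ whenever $p\colon D\to E$, $q\colon E\to F$. The plan is to choose, for each isomorphism type of induced substructure, a canonical representative, and reduce arbitrary partial automorphisms to automorphisms of these representatives via fixed ``transport'' automorphisms of $B$.

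First, partition the induced substructures of $A$ (i.e.\ subsets $D\subseteq A$) into classes under the relation ``there is a partial automorphism of $A$ mapping $D$ onto $D'$''. In each class fix a representative $D_0$. For every $D$ in that class fix a partial automorphism $\sigma_D\colon D_0\to D$, taking $\sigma_{D_0}=\mathrm{id}_{D_0}$. By hypothesis (1), each $\sigma_D$ extends to some automorphism $s_D\in\Aut(B)$; we fix such an extension, choosing $s_{D_0}=\mathrm{id}_B$.

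Second, given $p\colon D\to E$, the domain and range lie in the same class, with representative $D_0$. Then $\sigma_E^{-1}\circ p\circ\sigma_D$ is an automorphism of $D_0$. Define
\[
 \Phi(p)=s_E\,\rho_{D_0}\bigl(\sigma_E^{-1}p\,\sigma_D\bigr)\,s_D^{-1}.
\]
This extends $p$. Indeed, on $x\in D$ we have $s_D^{-1}x=\sigma_D^{-1}x\in D_0$, and $\rho_{D_0}$ extends the natural action on $D_0$ by hypothesis (2). So the image is $s_E\sigma_E^{-1}p(x)=p(x)$, because $\sigma_E^{-1}p(x)\in D_0$ and $s_E$ extends $\sigma_E$.

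Third, check coherence. For $p\colon D\to E$ and $q\colon E\to F$, all three sets lie in the same class with representative $D_0$. Then
\[
\Phi(q)\Phi(p)=s_F\rho_{D_0}(\sigma_F^{-1}q\sigma_E)s_E^{-1}s_E\rho_{D_0}(\sigma_E^{-1}p\sigma_D)s_D^{-1}.
\]
The middle factors $s_E^{-1}s_E$ cancel. Since $\rho_{D_0}$ is a homomorphism, the remaining product equals $s_F\rho_{D_0}(\sigma_F^{-1}qp\,\sigma_D)s_D^{-1}=\Phi(q\circ p)$.

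The only delicate point is the bookkeeping: in the middle cancellation, the same transport $s_E$ must be used for $E$ regardless of whether $E$ appears as a domain or a range. That is exactly why $\sigma_E$ and $s_E$ are fixed once per subset rather than per map. We also need $\rho_{D_0}$ to be a genuine group homomorphism on $\Aut(D_0)$. Degenerate cases, such as $D=\emptyset$, are handled by the same formula, since $\Aut(\emptyset)$ is trivial and $\rho_\emptyset$ is the trivial homomorphism. With this, $B$ is a coherent EPPA-witness for $A$.
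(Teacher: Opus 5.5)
Your proposal is correct and is essentially the paper's own proof. The paper fixes a representative $D_0$ for each isomorphism class, chooses isomorphisms $u_E\colon D_0\to E$ with extensions $\beta_E\in\Aut(B)$ (your $\sigma_E$ and $s_E$), and sets $\Phi(p)=\beta_F\rho_{D_0}(u_F^{-1}pu_E)\beta_E^{-1}$. Coherence then follows, as in your argument, by cancelling $\beta_E^{-1}\beta_E$ and using that $\rho_{D_0}$ is a homomorphism.
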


\begin{proof}
For each isomorphism class of induced substructures of $A$ choose a
representative $D_0\subseteq A$.  For every induced copy $E\subseteq A$ in
that class choose an isomorphism $u_E\colon D_0\to E$ and an automorphism
$\beta_E\in\Aut(B)$ extending $u_E$.  For a partial automorphism
$p\colon E\to F$ put
\[
 \Phi(p)=\beta_F\rho_{D_0}(u_F^{-1}pu_E)\beta_E^{-1}.
\]
Then $\Phi(p)\in\Aut(B)$ extends $p$.  If $p\colon E\to F$ and $q\colon F\to G$,
direct cancellation and
the homomorphism property of $\rho_{D_0}$ give
$\Phi(q\circ p)=\Phi(q)\Phi(p)$.
\end{proof}

\subsection*{Witness construction}
Given a finite nonempty set $X$, we now define a very symmetric two-graph $C_X$, together with a large group of automorphisms.  Later
we use the two-graphs $C_X$ as EPPA-witnesses.  Put
\[
 Q_X=\mathbb Z_2^X/\langle\mathbf1\rangle,
 \qquad V(C_X)=X\times Q_X.
\]
Here $\mathbf1$ denotes the constant-one function on $X$, so $Q_X$ consists
of functions $\chi\colon X\to\mathbb Z_2$ modulo the equivalence
$\chi\sim\chi+\mathbf1$.  We write $[\chi]$ for the equivalence class of
$\chi$, and write vertices of $C_X$ as $(x,[\chi])$. We call the first entry the \emph{projection}
and the second the \emph{valuation}, and we denote $\pi((x,[\chi]))=x$. For three distinct vertices
$v_i=(x_i,[\chi_i])$ ($i=1,2,3$), declare $\{v_1,v_2,v_3\}$ to be a triple precisely when
\begin{equation}
 t(v_1,v_2,v_3)
 =\sum_{1\le i<j\le3}
   \bigl(\chi_i(x_j)+\chi_j(x_i)\bigr)=1.
\label{eq:triple-valuation}
\end{equation}
Each pair of vertices contributes one bit to \eqref{eq:triple-valuation}.  Changing one
valuation representative by $\mathbf1$ flips the two pair-contributions
involving that vertex, so $t$ is well-defined.  On four vertices every
pair-contribution occurs twice, hence $C_X$ is a two-graph.

\subsection*{Automorphisms of \texorpdfstring{$C_X$}{C\_X}}
Next, we turn to the description of automorphisms of $C_X$ which we will later use to extend partial automorphisms.
Let $\mathcal S_X$ be the vector space of symmetric functions
$F\colon X^2\to\mathbb Z_2$, let $J_X$ be the constant-one function, and set
\[
 \overline{\mathcal S}_X=\mathcal S_X/\langle J_X\rangle.
\]
For $\sigma\in\Sym(X)$ and $F\in\mathcal S_X$ define
\begin{equation}
 \theta_{\sigma,F}(x,[\chi])=(\sigma x,[\xi]),
 \text{ where }
 \xi(\sigma y)=\chi(y)+F(x,y).
\label{eq:theta-action}
\end{equation}
The map is well-defined and bijective.  Under this map, the contribution
of any pair of vertices with first coordinates $x,y$ to \eqref{eq:triple-valuation} changes by
$F(x,y)+F(y,x)=0$, by symmetry of $F$.  Hence it is an automorphism of
$C_X$.  Moreover
\begin{equation}
 \theta_{\tau,G}\theta_{\sigma,F}
 =\theta_{\tau\sigma,F+\sigma^*G},
 \text{ where }
 (\sigma^*G)(x,y)=G(\sigma x,\sigma y),
\label{eq:theta-composition}
\end{equation}
and $\theta_{\sigma,F}=\theta_{\sigma,F+J_X}$.  Since
$\sigma^*J_X=J_X$, the definition
\[
 \sigma^*\bar F:=\overline{\sigma^*F}
\]
does not depend on the representative $F$ of $\bar F\in
\overline{\mathcal S}_X$.

Define
\[
 \Gamma_X=\Sym(X)\times\overline{\mathcal S}_X
\]
with multiplication
\[
 (\tau,\bar G)(\sigma,\bar F)
 =\bigl(\tau\sigma,\overline{F+\sigma^*G}\bigr).
\]
This makes $\Gamma_X$ a group, and \eqref{eq:theta-composition} gives a homomorphism
$\Gamma_X\to\Aut(C_X)$.  For $g\in\Gamma_X$ write $\theta_g$ for the
corresponding automorphism.

\subsection*{Extending partial automorphisms}
A subset of $C_X$ is \emph{transversal} if it contains at most one vertex
with each first coordinate.

\begin{lemma}\label{lem:transversal}
Let $\varphi$ be a partial automorphism of $C_X$ whose domain and range are transversal.
Put $D=\pi[\dom(\varphi)]$ and $V=\range(\varphi)$, and write
$U=\dom(\varphi)=\{(x,[\chi_x]):x\in D\}$.  Since $V$ is transversal, $\varphi$ induces an injection
$\sigma_0\colon D\to X$ on the first coordinates.  For $x\in D$ write
$\varphi(x,[\chi_x])=(\sigma_0x,[\eta_{\sigma_0x}])$;
this defines the valuation classes $[\eta_z]$ for $z\in\sigma_0[D]$.

Then for every permutation $\sigma\in\Sym(X)$ extending
$\sigma_0$ there is $\bar F\in \overline{\mathcal S}_X$ such that
$\theta_g$ extends $\varphi$, where $g=(\sigma,\bar F)\in\Gamma_X$.
Moreover, if $D=X$ then $g$ is unique.
\end{lemma}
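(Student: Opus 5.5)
The plan is to treat this as a linear-algebra problem over $\mathbb Z_2$. I fix $\sigma$ and write down exactly which $F$ make $\theta_{\sigma,F}$ extend $\varphi$. The condition turns out to be a system of linear equations in $F$ together with one unknown constant per point of $D$. Its only obstruction is a cocycle condition, and that condition is precisely the statement that $\varphi$ preserves triples. Throughout I fix representatives $\chi_x$ ($x\in D$) and $\eta_z$ ($z\in\sigma_0[D]$).

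\emph{Step 1 (reformulation).} By \eqref{eq:theta-action}, $\theta_{\sigma,F}(x,[\chi_x])=(\sigma x,[\xi])$ with $\xi(\sigma y)=\chi_x(y)+F(x,y)$. This equals $(\sigma x,[\eta_{\sigma x}])$ if and only if there is $c_x\in\mathbb Z_2$ with
\[
 F(x,y)=h_x(y)+c_x\quad\text{for all }y\in X,\qquad h_x(y):=\chi_x(y)+\eta_{\sigma x}(\sigma y).
\]
The diagonal value $F(x,x)$ is included here, since it determines the valuation at the vertex's own projection. So $\theta_{\sigma,F}$ extends $\varphi$ exactly when $F$ restricted to $D\times X$ has this form for some $c\colon D\to\mathbb Z_2$. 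The constraint of symmetry only bites on $D\times D$, where it becomes
\[
 e(x,y):=h_x(y)+h_y(x)=c_x+c_y\qquad(x\neq y\in D).
\]

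\emph{Step 2 (the cocycle condition, the main point).} Let $x,y,z\in D$ be distinct. Compare $t$ on the triple $(x,[\chi_x]),(y,[\chi_y]),(z,[\chi_z])$ with $t$ on its image under $\varphi$. The pair contribution for $\{x,y\}$ changes by $\chi_x(y)+\chi_y(x)+\eta_{\sigma x}(\sigma y)+\eta_{\sigma y}(\sigma x)=e(x,y)$. Since $\varphi$ preserves triples,
\[
 e(x,y)+e(y,z)+e(x,z)=0 .
\]
If $|D|\le1$ there is nothing to check. Otherwise fix $x_0\in D$ and put $c_{x_0}=0$ and $c_x=e(x_0,x)$ for $x\neq x_0$. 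The triangle identity then gives $e(x,y)=c_x+c_y$ for all distinct $x,y$: this is the identity itself when $x,y\neq x_0$, and the definition when one of them is $x_0$.

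\emph{Step 3 (construction).} Define $F(x,y)=h_x(y)+c_x$ for $x\in D$ and $y\in X$, and $F(y,x)=F(x,y)$ for $x\in D$ and $y\notin D$. Set $F=0$ on $(X\setminus D)^2$. Step 2 shows this is consistent on $D\times D$, so $F$ is symmetric, and by Step 1 $\theta_{(\sigma,\bar F)}$ extends $\varphi$. Nothing here depended on the choice of the extension $\sigma$ of $\sigma_0$.

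\emph{Step 4 (uniqueness when $D=X$).} Now $\sigma=\sigma_0$ is forced. Step 1 says every admissible $F$ is given by $F(x,y)=h_x(y)+c_x$ everywhere on $X\times X$. Symmetry then forces $c_x+c_y=e(x,y)$ for all $x,y$, which determines $c$ up to adding a global constant. Replacing $c$ by $c+\mathbf1$ changes $F$ by $J_X$. Hence $\bar F$ is unique, and so is $g$.

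The only step with real content is Step 2, identifying the triple-preservation hypothesis with the cocycle condition that makes $F$ symmetric on $D\times D$. The remaining steps are bookkeeping about representatives: changing $\chi_x$ or $\eta_z$ by $\mathbf1$ only shifts $h_x$ by a constant, which is absorbed into $c_x$.
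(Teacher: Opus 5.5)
Your proof is correct and follows the paper's argument essentially step for step. Your $e(x,y)$ is the paper's $\delta(x,y)$, and the triple identity and the base-point construction of the cut $c$ are the same. The row-by-row definition of $F$ (with $F=0$ on $(X\setminus D)^2$) is identical, and uniqueness for $D=X$ comes, as in the paper, from the rows of any admissible $F$ being fixed up to one common constant, i.e.\ up to $J_X$.
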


\begin{proof}
Choose representatives $\chi_x$ and $\eta_{\sigma_0x}$ of the valuation
classes appearing in $U$ and $V$.  For distinct $x,y\in D$ put
\[
 \delta(x,y)=
 \chi_x(y)+\chi_y(x)
 +\eta_{\sigma x}(\sigma y)+\eta_{\sigma y}(\sigma x).
\]
Since $\varphi$ preserves the two-graph relation, the values of $\delta$ satisfy the \emph{triple identity}
$\delta(x,y)+\delta(x,z)+\delta(y,z)=0$ for all distinct $x,y,z\in D$.
We claim that there is $c\colon D\to\mathbb Z_2$ such that, for all distinct $x,y\in D$,
\begin{equation}
 \delta(x,y)=c(x)+c(y).
\label{eq:cut}
\end{equation}
(This is the usual cut arising in Seidel switching.)
If $D=\varnothing$, use $c=\varnothing$.  Otherwise choose $x_0\in D$, put $c(x_0)=0$ and
$c(x)=\delta(x_0,x)$ for $x\ne x_0$.
The pairs containing $x_0$ satisfy \eqref{eq:cut} by definition, and
all other pairs by the triple identity for $x_0,x,y$. 

For $x\in D$ and $y\in X$, put
\begin{equation}
 F(x,y)=F(y,x)=\chi_x(y)+\eta_{\sigma x}(\sigma y)+c(x).
\label{eq:matrix-row}
\end{equation}
Note that this is well-defined even if $y\in D$: this follows from~\eqref{eq:cut}.
If $x,y\in X\setminus D$ put $F(x,y)=0$.
For $x\in D$, the valuation of
$\theta_{\sigma,F}(x,[\chi_x])$ differs from $\eta_{\sigma_0x}$ by the
constant function $c(x)$, so $\theta_{\sigma,F}$ extends $\varphi$.

Assume now $D=X$.  If $F'$ gives another extension with the same $\sigma$,
then for each $x$ the row $F'(x,-)+F(x,-)$ is constant, say $c(x)$.
Symmetry forces all $c(x)$ to be equal (the one-point case is immediate), so
$F'=F$ or $F'=F+J_X$.  Thus $\bar F$, and hence $g$, is unique.
\end{proof}

The following lemma is key for coherence.

\begin{lemma}\label{lem:group-extension}
Let $D\subseteq X$ be nonempty, with $|D|$ dividing $|X|$, and let
\[
 V_D=\{(d,[\chi_d]):d\in D\}\subseteq C_X
\]
be transversal.  There is a homomorphism
\[
 \rho_D\colon\Aut(V_D)\longrightarrow\Aut(C_X)
\]
extending the natural action on $V_D$.
\end{lemma}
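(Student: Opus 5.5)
The plan is to reduce to Lemma~\ref{lem:transversal} in the case $D=X$, where its uniqueness clause makes the choice of extension automatically multiplicative. Write $|X|=k|D|$ and fix a bijection $X\cong D\times\{1,\dots,k\}$ sending each $d\in D$ to $(d,1)$. Every $\alpha\in\Aut(V_D)$ permutes $V_D$, and $V_D$ is transversal, so $\alpha$ induces a permutation $\alpha_0$ of $D$. Put $\sigma_\alpha(d,i)=(\alpha_0 d,i)$. Then $\alpha\mapsto\sigma_\alpha$ is a homomorphism $\Aut(V_D)\to\Sym(X)$ that acts in the same way on each of the $k$ blocks.

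\emph{Step 1: a full transversal with block-symmetric triples.} For $x=(d,i)$ and $y=(e,j)$ define $\psi_x(y)=\chi_d(e)$ if $e\neq d$, and $\psi_x(y)=0$ if $e=d$. Let $W=\{(x,[\psi_x]):x\in X\}$. The pair contribution of $x,y$ to \eqref{eq:triple-valuation} is then $\chi_d(e)+\chi_e(d)$ when $d\ne e$, which is the contribution of the corresponding pair in $V_D$, and $0$ when $d=e$.

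Consider a triple in $W$ whose projections lie over three distinct points of $D$. Its value $t$ equals that of the corresponding triple in $V_D$. For any other triple, two of its vertices lie over the same $d$, and then $t=0+2f=0$, where $f$ is the common contribution of the two remaining pairs. Since $\alpha$ preserves triples of $V_D$, the map $\hat\alpha\colon(x,[\psi_x])\mapsto(\sigma_\alpha x,[\psi_{\sigma_\alpha x}])$ is an automorphism of the induced two-graph on $W$. Clearly $\widehat{\beta\alpha}=\hat\beta\hat\alpha$.

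\emph{Step 2: homomorphism on $W$.} Apply Lemma~\ref{lem:transversal} to $\hat\alpha$, whose domain $W$ projects onto all of $X$. This gives a unique $g_\alpha\in\Gamma_X$ with permutation part $\sigma_\alpha$ such that $\theta_{g_\alpha}$ extends $\hat\alpha$. The product $g_\beta g_\alpha$ has permutation part $\sigma_\beta\sigma_\alpha=\sigma_{\beta\alpha}$, and $\theta_{g_\beta g_\alpha}=\theta_{g_\beta}\theta_{g_\alpha}$ extends $\widehat{\beta\alpha}$. Uniqueness therefore gives $g_{\beta\alpha}=g_\beta g_\alpha$. Composing with $\Gamma_X\to\Aut(C_X)$ yields a homomorphism $\alpha\mapsto\theta_{g_\alpha}$ that acts on $W_1=\{((d,1),[\psi_{(d,1)}]):d\in D\}$ as $\alpha$ acts on $V_D$, via $(d,[\chi_d])\leftrightarrow((d,1),[\psi_{(d,1)}])$.

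\emph{Step 3: transport back to $V_D$.} The bijection $V_D\to W_1$, $(d,[\chi_d])\mapsto(d,[\psi_{(d,1)}])$, preserves triples by the computation in Step~1, and its domain and range are transversal. Lemma~\ref{lem:transversal}, with $\sigma=\mathrm{id}$, gives $h\in\Gamma_X$ with $\theta_h$ extending it. Define
\[
 \rho_D(\alpha)=\theta_h^{-1}\theta_{g_\alpha}\theta_h .
\]
This is a homomorphism because it is a conjugate of one. It extends $\alpha$ on $V_D$, because $\theta_h$ intertwines $V_D$ with $W_1$, and on $W_1$ the automorphism $\theta_{g_\alpha}$ acts as the transported $\alpha$.

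The main point needing care is Step~1. One must check that, with the valuations on the diagonal blocks set to $0$, every triple meeting some fibre $\{d\}\times\{1,\dots,k\}$ twice has $t=0$, so that $\sigma_\alpha$-invariance of triples on $W$ follows purely from $\alpha$ preserving $V_D$. One must also check that the identification $d\leftrightarrow(d,1)$ makes the map in Step~3 a genuine partial automorphism. Both reduce to the pair-contribution formula above.
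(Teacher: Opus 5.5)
Your proof is correct, but it is organised differently from the paper's.

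The paper applies the uniqueness clause of Lemma~\ref{lem:transversal} in the small two-graph $C_D$, to the restricted copy $U_D=\{(d,[\chi_d|_D])\}$, which yields a homomorphism $s_D\colon\Aut(U_D)\to\Gamma_D$. It then builds a group homomorphism $\iota_r\colon\Gamma_D\to\Gamma_X$ by inflating permutations blockwise and pulling back matrices along the retraction $r$. Next it checks that the embedding $j_r\colon C_D\to C_X$ intertwines $\theta_g$ with $\theta_{\iota_r(g)}$. Finally it conjugates by an automorphism carrying $j_r(U_D)$ to $V_D$.

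You stay inside $C_X$ throughout. You build a full transversal $W$ over all of $X$ on which the blockwise permutations $\sigma_\alpha$ act as automorphisms; setting the own-fibre entries to $0$ makes every triple that meets a fibre twice a non-hyperedge. You then apply uniqueness directly in $\Gamma_X$, so multiplicativity of $\alpha\mapsto g_\alpha$ is automatic. Your Step~3 is the paper's final conjugation.

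What each route buys:
\begin{itemize}
\item Yours dispenses with $C_D$, $\Gamma_D$, the pullback $r^*$ and the equivariance computation for $j_r$, replacing them with one elementary triple check.
\item The paper's exhibits the inflation $\iota_r$ as a homomorphism on all of $\Gamma_D$, independent of the particular $V_D$.
\end{itemize}

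The two constructions are closer than they look. $\theta_{\iota_r(s_D(\cdot))}$ preserves the inflated full transversal $\{(x,[\chi_{r(x)}|_D\circ r]):x\in X\}$ and acts on it by the blockwise permutation. So if you run your argument with that choice of $W$, uniqueness in $\Gamma_X$ returns exactly the paper's lift $\iota_r\circ s_D$.
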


\begin{proof}
Restrict the valuations to $D$:
\[
 U_D=\{(d,[\chi_d|_D]):d\in D\}\subseteq C_D.
\]
Note that a triple of vertices from $V_D$ forms a hyperedge in $C_X$ if and only
if their corresponding restrictions in $U_D$ form a hyperedge in $C_D$.
Consequently, automorphisms of $U_D$ and $V_D$ induce the same permutation
group on $D$.  We identify these two groups via their actions on the first
coordinates.
By the uniqueness part of Lemma~\ref{lem:transversal}, each
$h\in\Aut(U_D)$ has a unique lift $s_D(h)\in\Gamma_D$, and uniqueness implies
that $s_D$ is a homomorphism.

Partition $X$ into blocks of size $|D|$, with $D$ itself as one block.
For every block fix a bijection to $D$, taking the identity on $D$, and let
$r\colon X\to D$ be the resulting retraction.  Inflate $\sigma\in\Sym(D)$
to the permutation $\widehat\sigma$ acting as $\sigma$ on each block via
these bijections, so
\begin{equation}
 r(\widehat\sigma x)=\sigma(r(x)).
\label{eq:retraction-equivariance}
\end{equation}
Pull back symmetric matrices by
\[
 (r^*F)(x,y)=F(r(x),r(y)).
\]
Since $r^*J_D=J_X$, this is well-defined on quotient classes.  Equation
\eqref{eq:retraction-equivariance} gives
\begin{equation}
 r^*(\sigma^*\bar F)=\widehat\sigma^{\,*}(r^*\bar F).
\label{eq:matrix-pullback}
\end{equation}
By \eqref{eq:matrix-pullback}, the map $\iota_r\colon\Gamma_D\longrightarrow\Gamma_X$ defined by
$\iota_r(\sigma,\bar F)=(\widehat\sigma,r^*\bar F)$
is a group homomorphism.

There is also an embedding $j_r\colon C_D\longrightarrow C_X$ given by $j_r(d,[\chi])=(d,[\chi\circ r])$. 
A direct substitution in \eqref{eq:theta-action}, using \eqref{eq:retraction-equivariance}, gives $j_r(\theta_g(v))=\theta_{\iota_r(g)}(j_r(v))$
for every $g\in\Gamma_D$ and $v\in C_D$.
In particular, for $h\in\Aut(U_D)$, the automorphism
$\theta_{\iota_r(s_D(h))}$ of $C_X$ restricts on $W_D=j_r(U_D)$ to the
action of $h$ transported by $j_r$.

Finally, the map from $W_D$ to $V_D$ which preserves the first
coordinate is an isomorphism: the corresponding valuations agree when
restricted to $D$, since $r|_D=\mathrm{id}_D$.  Lemma~\ref{lem:transversal},
with first-coordinate permutation $\mathrm{id}_X$, extends this map to some
$\gamma\in\Aut(C_X)$.  Therefore
\[
 \rho_D(h)=
 \gamma\,\theta_{\iota_r(s_D(h))}\,\gamma^{-1}
\]
is the desired homomorphism.
\end{proof}

\bigskip

We are now ready to prove coherent EPPA for two-graphs.

\begin{theorem}\label{thm:coherent-twographs}
The class $\cT$ of all finite two-graphs has coherent EPPA.
\end{theorem}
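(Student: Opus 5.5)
Given a finite two-graph $A$ on $n$ vertices, I would take $X$ with $|X|$ divisible by every $k\le n$, for instance $|X|=n!$ (and $|X|\ge n$). The plan is to embed $A$ into $C_X$ as a transversal set, and then to verify the two hypotheses of Lemma~\ref{lem:coherentisation} for $B=C_X$.

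\emph{Embedding.} Fix $D_A\subseteq X$ with $|D_A|=n$ and identify the vertices of $A$ with $D_A$. Choose a graph $G$ on $D_A$ with $T(G)=A$; this is possible since every two-graph is of this form. For $x\in D_A$ define $\chi_x(y)=c_G(xy)$ for $y\in D_A\setminus\{x\}$, and set $\chi_x$ to $0$ elsewhere. The pair-contribution of $x,y$ to \eqref{eq:triple-valuation} is then $2c_G(xy)=0$, which is not what we want. So I would instead put $\chi_x(y)=c_G(xy)$ only when $x<y$ in a fixed linear order, and $0$ otherwise. Now the contribution of the pair $x,y$ is $c_G(xy)$, so $t$ counts the edges of $G$ in the triple modulo $2$. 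Hence $U=\{(x,[\chi_x]):x\in D_A\}$ is a transversal copy of $A$ in $C_X$.

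\emph{Hypothesis (1).} Every partial automorphism $p$ of $A$ is a partial automorphism of $C_X$ whose domain and range are transversal. Its projection is an injection $\sigma_0$ between subsets of $D_A$, which extends to some $\sigma\in\Sym(X)$. Lemma~\ref{lem:transversal} then gives $g\in\Gamma_X$ with $\theta_g$ extending $p$.

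\emph{Hypothesis (2).} Let $D'\subseteq U$ be a nonempty induced substructure; the empty case is trivial, taking the trivial homomorphism. $D'$ is transversal, say $D'=V_D$ for $D=\pi[D']$, and $|D|\le n$ divides $|X|$ by the choice of $X$. Lemma~\ref{lem:group-extension} then provides a homomorphism $\rho_D\colon\Aut(V_D)\to\Aut(C_X)$ extending the natural action. Lemma~\ref{lem:coherentisation} now yields that $C_X$ is a coherent EPPA-witness for $A$. Finally, $C_X$ is a finite two-graph, as noted after \eqref{eq:triple-valuation}.

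The main point needing care is the embedding step: the symmetric choice of valuations makes every pair-contribution vanish. The asymmetric (ordered) choice above fixes this, and I expect the rest to be direct applications of the earlier lemmas.
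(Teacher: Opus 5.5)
Your proposal is correct and follows essentially the same route as the paper. You embed $A$ transversally into $C_X$ using an asymmetric, order-dependent choice of valuations coming from a switching representative $G$. You take $|X|$ divisible by every $k\le n$; the paper uses $\operatorname{lcm}(1,\dots,n)$ instead of $n!$, which makes no difference. You then verify the two hypotheses of Lemma~\ref{lem:coherentisation} via Lemmas~\ref{lem:transversal} and~\ref{lem:group-extension}, exactly as the paper does.
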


\begin{proof}
Let $A = (V(A), T(A))$ be a two-graph with $n\ge1$
vertices and put
\[
 N=\operatorname{lcm}(1,2,\dots,n).
\]
Choose a set $X\supseteq V(A)$ of size $N$.

Choose a graph $G$ on $V(A)$ with $T(G)=T(A)$ (a representative of the
switching class of $A$), and a linear order on $X$.  For each $x\in V(A)$
choose a valuation $\chi_x\colon X\to\mathbb Z_2$ such that for $x<y$ in $V(A)$,
\[
 \chi_x(y)=0,
 \qquad
 \chi_y(x)=\mathbf1_{xy\in E(G)},
\]
and fill unused values arbitrarily.  By \eqref{eq:triple-valuation}, the map
$x\mapsto(x,[\chi_x])$ is an embedding of $A$ to a transversal copy in $C_X$.

Every partial automorphism of this copy induces a partial bijection of the
first coordinates, which extends to a permutation of $X$; hence by
Lemma~\ref{lem:transversal}, $C_X$ is an EPPA-witness for $A$.  For every
nonempty induced $D\subseteq A$, the number $|D|$ divides $N$, so
Lemma~\ref{lem:group-extension} gives a homomorphic extension of
$\Aut(D)$ in $\Aut(C_X)$; for empty $D$, use the trivial homomorphism.  Now apply
Lemma~\ref{lem:coherentisation}.
\end{proof}

\section{Two-graphs modulo global complementation}\label{sec:complementing-two-graphs}

We finally consider $\cT^{\pm}$.  Recall that a structure in this class is
a finite set equipped with an equivalence relation on unordered triples with at
most two classes, each class being a two-graph.  Choose a representative
two-graph and write $t$ for its triple indicator.  A partial bijection
$p\colon D\to E$ is a partial automorphism if and only if, whenever
$|D|\ge3$, there is a single $\varepsilon\in\mathbb Z_2$ such that
\[
 t(p(X))=t(X)+\varepsilon
\]
for every $X\in\binom D3$; for $|D|<3$ there is no condition.

The negative result now follows from the graph case by rooting the
two-graph:  Once a vertex $*$ is distinguished, the triples containing $*$
determine a graph on the remaining vertices, and the four-set parity
condition determines all other triples.

\begin{proposition}\label{prop:rooting}
If $\cT^{\pm}$ has EPPA, then $\cG^{\pm}$ has EPPA.
\end{proposition}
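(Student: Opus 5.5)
We prove this via the rooting construction. Given a graph $A=(V,E)$, representing a structure of $\cG^{\pm}$, we build the two-graph $A^*$ on $V\cup\{*\}$, with $*$ a new vertex. Its hyperedges are taken to be $T(A)$, where $A$ is viewed as a graph on $V\cup\{*\}$ with $*$ isolated. Concretely, $\{*,x,y\}$ is a triple iff $xy\in E$, and $\{x,y,z\}\subseteq V$ is a triple iff it spans an odd number of edges. We regard $A^*$ as a member of $\cT^{\pm}$.

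\emph{Lifting partial automorphisms.} Let $p\colon D\to E$ be a partial automorphism of $A$ in $\cG^{\pm}$ with sign $\varepsilon$, as in \eqref{eq:pair-sign}. Put $p^*=p\cup\{(*,*)\}$. We check that $p^*$ is a partial automorphism of $A^*$ with the same sign $\varepsilon$.
\begin{itemize}
\item For triples through $*$, this is exactly \eqref{eq:pair-sign}.
\item For a triple $xyz\subseteq D$, its indicator is the sum of its three pair colours. Each of these changes by $\varepsilon$, so the indicator changes by $3\varepsilon=\varepsilon$.
\end{itemize}
When $|D|<2$, every triple of $D\cup\{*\}$ is missing, so there is nothing to check.

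\emph{Applying EPPA for $\cT^{\pm}$.} Let $B^*\in\cT^{\pm}$ be an EPPA-witness for $A^*$, with triple indicator $t$. To avoid dealing with the global choice of representative, we fix the representative class containing $t$ restricted to $A^*$. We also shrink $B^*$ to the orbit-relevant part, namely the set of all vertices. Define a graph $B$ on $V(B^*)\setminus\{*\}$ by
\[
c_B(xy)=t(*xy).
\]
By construction $A$ is an induced subgraph of $B$, and we regard $B$ in $\cG^{\pm}$.

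\emph{Restricting automorphisms.} Let $p$ be a partial automorphism of $A$. Extend $p^*$ to some $g\in\Aut(B^*)$. Since $g$ fixes $*$, it maps $B$ onto itself. There is a global $\varepsilon'$ with $t(gX)=t(X)+\varepsilon'$ for all triples $X$. Applying this to triples through $*$ gives
\[
c_B(gx\,gy)=c_B(xy)+\varepsilon'.
\]
Hence $g|_B$ is an automorphism of $B$ in $\cG^{\pm}$, and it extends $p$. So $B$ is an EPPA-witness for $A$, which gives the proposition.

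\emph{Main obstacle: degenerate cases.}
\begin{itemize}
\item If $A^*$ realises only one triple class while $B^*$ has two, the uniform-$\varepsilon'$ argument still works. This is because the $\cT^{\pm}$ automorphism condition is global on $B^*$.
\item If $|V(B^*)|<3$, the statement is trivial.
\item The substantive point is that $B^*$ automorphisms are required to respect the equivalence on all triples of $B^*$. This is what forces a single $\varepsilon'$ on triples through $*$, and it holds by definition.
\item A small $p$ with $|D|\le1$ lifts to $p^*$ with domain of size at most $2$, which is also a legitimate partial automorphism.
\end{itemize}
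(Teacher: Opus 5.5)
Your proof is correct and follows the paper's rooting argument essentially step for step: you root the two-graph at an isolated new vertex $*$, lift $p$ to $p\cup\{*\mapsto *\}$, define $c_B(xy)=t(*xy)$, and get a single sign from the fact that $g$ preserves or swaps the triple classes. The one point the paper spells out more carefully is the choice of representative when $B^*$ has at most one triple class (it takes the empty or complete two-graph so the restriction matches). Your phrasing glosses over this, but it is harmless: modulo complementation either choice yields a copy of $A$, and a constant representative is preserved by every automorphism.
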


\begin{proof}
Let $A$ be a graph modulo complementation, choose a representative with edge
indicator $c$, and add a new vertex $*$.  Define a two-graph on
$V(A)\cup\{*\}$ by taking the two-graph associated with the graph in which
$*$ is isolated.  Explicitly,
\begin{equation}
 t(*,x,y)=c(xy),
 \qquad
 t(x,y,z)=c(xy)+c(xz)+c(yz).
\label{eq:rooting-triples}
\end{equation}
Regard this as an object $\widehat A\in\cT^{\pm}$.

Let $p\colon D\to E$ be a partial automorphism of $A$.  If $|D|\ge2$, by
\eqref{eq:pair-sign} there is $\varepsilon\in\mathbb Z_2$ such that
\[
 c(p(x)p(y))=c(xy)+\varepsilon
\]
for all pairs in $D$; for $|D|\le1$ there is nothing to check.
Triples containing $*$ change colour by $\varepsilon$, while triples
disjoint from $*$ change colour by $3\varepsilon=\varepsilon$.  Thus
$\widehat p=p\cup\{*\mapsto *\}$ is a partial automorphism of
$\widehat A$ modulo complementation.

Assume that $C\supseteq\widehat A$ is an EPPA-witness in $\cT^{\pm}$.
Choose a representative two-graph $T_C$ whose restriction to $\widehat A$
is the representative in \eqref{eq:rooting-triples}.  If $C$ realises two triple classes, choose
the appropriate one of them as the triple relation.  If $C$ has at most one
triple class, then the restriction \eqref{eq:rooting-triples} is constant (or
contains no triples), and we take $T_C$ to be the empty or the complete
two-graph so that the restriction agrees.
Define a graph $B$ on $V(C)\setminus\{*\}$ by
\[
 c_B(xy)=\mathbf1_{\{*,x,y\}\in T_C}.
\]
Then $B$ extends the chosen representative of $A$.

For a partial automorphism $p$ of $A$, extend $\widehat p$ to
$g\in\Aut(C)$.  Since $g$ fixes $*$, it permutes
$V(C)\setminus\{*\}$.  If $C$ realises two triple classes, $g$ either
preserves or swaps them; in the at-most-one-class case the chosen
representative is constant and $g$ preserves it.  Thus in either case there is
$\delta_g\in\mathbb Z_2$ such that
\[
 c_B(gx,gy)
 =t_C(*,gx,gy)
 =t_C(*,x,y)+\delta_g
 =c_B(xy)+\delta_g.
\]
Thus the restriction of $g$ is an automorphism of $B$ modulo global
complementation extending $p$.  Hence $B$ is an EPPA-witness for $A$ in
$\cG^{\pm}$.
\end{proof}

\begin{corollary}\label{cor:complementing-two-graphs}
The class $\cT^{\pm}$ does not have EPPA.
\end{corollary}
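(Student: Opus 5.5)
This is a contrapositive combining Proposition~\ref{prop:rooting} with Theorem~\ref{thm:complementing-graph}. Suppose, for contradiction, that $\cT^{\pm}$ has EPPA. Proposition~\ref{prop:rooting} then says that $\cG^{\pm}$ has EPPA. That contradicts Theorem~\ref{thm:complementing-graph}, which in turn came from the six-vertex obstruction of Proposition~\ref{prop:a6}. So $\cT^{\pm}$ does not have EPPA.

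The only step needing care is to check that the proof of Proposition~\ref{prop:rooting} really yields a member of $\cG^{\pm}$ extending $A$. Two points are involved.
\begin{itemize}
\item The graph $B$ on $V(C)\setminus\{*\}$ must contain the chosen representative of $A$ as an induced subgraph. This holds because $T_C$ restricts to the triples in \eqref{eq:rooting-triples}.
\item The restriction of each $g$ must be a partial-automorphism-preserving map modulo complementation. This follows from the uniform shift $\delta_g$ computed in that proof.
\end{itemize}
Both points are already verified there, so nothing new is needed.

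For a more concrete certificate, apply the rooting directly to $A_6$. Form $\widehat{A_6}$ on seven vertices and the extended maps $\widehat p_5$ and $\widehat p_6$, both of which fix $*$. Any EPPA-witness in $\cT^{\pm}$ extending these two maps would, via the rooted graph $B$, give a member of $\cG^{\pm}$ containing $A_6$ in which $p_5$ and $p_6$ both extend. Proposition~\ref{prop:a6} rules this out.

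I expect no real obstacle. The substance lies in Proposition~\ref{prop:a6} and Proposition~\ref{prop:rooting}, and the corollary is a two-line deduction.
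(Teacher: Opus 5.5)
Your proposal is correct and matches the paper's proof: you assume EPPA for $\cT^{\pm}$, use Proposition~\ref{prop:rooting} to get EPPA for $\cG^{\pm}$, and contradict Theorem~\ref{thm:complementing-graph}. Your concrete variant, which roots $A_6$ and transfers extensions of $\widehat p_5,\widehat p_6$ to contradict Proposition~\ref{prop:a6} directly, is also valid and just unpacks the same argument.
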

\begin{proof}
By Proposition~\ref{prop:rooting}, EPPA for $\cT^{\pm}$ would imply EPPA
for $\cG^{\pm}$, contradicting Theorem~\ref{thm:complementing-graph}.
\end{proof}

We can now prove Theorem~\ref{thm:classification}:
\begin{proof}[Proof of Theorem~\ref{thm:classification}]
The free-amalgamation classes $\cG$ and $\cS$ have coherent EPPA by
Siniora and Solecki~\cite[Corollary~4.6]{Siniora}.
Theorem~\ref{thm:coherent-twographs} handles $\cT$, while
Theorem~\ref{thm:complementing-graph} and
Corollary~\ref{cor:complementing-two-graphs} handle the remaining two
classes.  Coherent EPPA implies EPPA.
\end{proof}

\section{Conclusion}\label{sec:conclusion}

Although EPPA and coherent EPPA coincide for the five reducts considered here,
they need not coincide in general.  Evans asked whether EPPA for the age of a
homogeneous $\omega$-categorical structure implies that its automorphism group
contains a dense locally finite subgroup (see~\cite[Question~3.1]{Bhattacharjee2005}
and~\cite{Sagi2024}).  Siniora subsequently asked whether EPPA implies coherent
EPPA and whether the existence of a dense locally finite subgroup implies
coherent EPPA~\cite[Questions~3 and~4]{Siniora2}.  In forthcoming work on
generalised semigeneric structures, examples separating all three properties
are constructed, giving negative answers to these questions.

\subsection{Graphs with loops}\label{sec:loops}

Curiously, EPPA holds for a variant of $\cG^{\pm}$. Concretely,
Beliayeu proved in his bachelor's thesis~\cite{Beliayeu2023bc} that if
loops are allowed and global complementation is required to complement the
loops together with all other edges, then EPPA holds.  Here a graph with loops is a finite set with a symmetric binary
relation $E$, with loops allowed; a partial bijection is a
\emph{morphism} if it either preserves or complements $E$ on its
domain, including the loops.  Unlike in the loopless case, a
complementing automorphism cannot fix even one vertex, since it would
have to complement the loop at that vertex. 

\begin{proposition}[Beliayeu~\cite{Beliayeu2023bc}]\label{prop:beliayeu}
Every finite graph with loops has a finite extension in which every partial
morphism extends to a total morphism of the same kind.
\end{proposition}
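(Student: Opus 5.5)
The plan is to mimic the valuation-function construction of Section~\ref{sec:coherent-twographs}, but with non-symmetric matrices. The reason is that a complementing morphism must flip every pair, and every loop, by the same bit.

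Let $A$ be a finite graph with loops. Take $X\supseteq V(A)$ finite; $X=V(A)$ should already suffice. Define a graph with loops $B_X$ on $X\times\mathbb Z_2^X$ as follows:
\begin{itemize}
\item for $x\ne y$, the vertices $(x,\chi)$ and $(y,\xi)$ are adjacent iff $\chi(y)+\xi(x)=1$;
\item for $x=y$ and $\chi\ne\xi$, the vertices $(x,\chi)$ and $(x,\xi)$ are non-adjacent;
\item $(x,\chi)$ carries a loop iff $\chi(x)=1$.
\end{itemize}
To embed $A$ transversally, fix a linear order on $V(A)$. For $x<y$ set $\chi_x(y)=0$ and $\chi_y(x)=\mathbf 1_{xy\in E}$, set $\chi_x(x)=\mathbf 1_{xx\in E}$, and fill the remaining values arbitrarily. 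Then $x\mapsto(x,\chi_x)$ is an embedding.

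For $\sigma\in\Sym(X)$ and an arbitrary function $F\colon X^2\to\mathbb Z_2$, define $\theta_{\sigma,F}(x,\chi)=(\sigma x,\xi)$ with $\xi(\sigma y)=\chi(y)+F(x,y)$, exactly as in \eqref{eq:theta-action}. This map is a bijection. Its effect on the structure is:
\begin{itemize}
\item the adjacency bit of a pair with distinct projections $x,y$ changes by $F(x,y)+F(y,x)$;
\item the loop bit at a vertex with projection $x$ changes by $F(x,x)$;
\item pairs with equal projection stay non-adjacent.
\end{itemize}
Hence $\theta_{\sigma,F}$ is a morphism of sign $\varepsilon$ provided
\[
 F(x,y)+F(y,x)=\varepsilon\ (x\ne y),\qquad F(x,x)=\varepsilon .
\]
For $\varepsilon=0$ this says $F$ is symmetric with zero diagonal. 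For $\varepsilon=1$ it says $F$ is a ``tournament matrix'' with ones on the diagonal.

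Next I will prove the analogue of Lemma~\ref{lem:transversal}. Let $\varphi$ be a partial morphism of sign $\varepsilon$ between transversal sets, with $D=\pi[\dom\varphi]$, $\varphi(x,\chi_x)=(\sigma_0x,\eta_{\sigma_0x})$, and $\sigma\supseteq\sigma_0$. For $x\in D$ and all $y\in X$, define
\[
 F(x,y)=\chi_x(y)+\eta_{\sigma x}(\sigma y).
\]
This forces $\theta_{\sigma,F}$ to extend $\varphi$ exactly. Because there is no quotient by $\mathbf 1$ here, no cut function $c$ is needed. The constraints must then be checked.
\begin{itemize}
\item For distinct $x,y\in D$, the value $F(x,y)+F(y,x)$ is the change of the adjacency bit under $\varphi$, so it equals $\varepsilon$ because $\varphi$ is a morphism of sign $\varepsilon$.
\item For $x\in D$, the value $F(x,x)$ is the change of the loop bit, so it also equals $\varepsilon$. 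This is where loops being complemented is used essentially. In the loopless setting of Proposition~\ref{prop:a6} this consistency fails.
\end{itemize}
The remaining entries are then completed freely:
\begin{itemize}
\item for $x\in D$ and $y\notin D$, put $F(y,x)=F(x,y)+\varepsilon$;
\item for distinct $x,y\notin D$ with $x<y$ in some fixed order, put $F(x,y)=0$ and $F(y,x)=\varepsilon$;
\item for $y\notin D$, put $F(y,y)=\varepsilon$.
\end{itemize}
Then $\theta_{\sigma,F}$ is a total morphism of sign $\varepsilon$ extending $\varphi$.

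Finally, a partial morphism of $A$, transported to the transversal copy, is a partial morphism of $B_X$ of the same sign. Its projection is a partial bijection of $X$, which extends to some $\sigma$. The extension step then yields a total morphism of $B_X$ of the same kind. If the domain has size $0$, the claim is trivial. If the domain is a single vertex with no loop information forcing $\varepsilon$, either sign is compatible, and we use the sign determined by the loop.

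The main obstacle I anticipate is the bookkeeping around degenerate cases: small domains, and making sure ``of the same kind'' is well-defined when the sign is not determined by the domain. A secondary issue is checking that pairs $(x,\chi),(x,\xi)$ with the same projection are genuinely preserved, both by non-adjacency and by $\theta$ mapping them to a same-projection pair. This holds because $\theta_{\sigma,F}$ preserves fibres over $\pi$ and the non-adjacency convention is invariant under both signs only if we treat such pairs as ``neither''. If that causes trouble, I will restrict $B_X$ to a transversal-closed substructure or use $X\times\mathbb Z_2^X$ with same-fibre pairs encoded as the value $\chi(x)+\xi(x)$, adjusting so that flipping by $\varepsilon$ is respected.
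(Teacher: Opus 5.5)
There is a genuine gap. It sits in the complementing case itself, not in the degenerate cases you flag at the end. Your computations for pairs with distinct projections and for loops are correct. However, $\theta_{\sigma,F}$ maps the fibre $\{x\}\times\mathbb Z_2^X$ onto the fibre over $\sigma x$, so a same-fibre pair, which you declared non-adjacent, is sent to a same-fibre pair that is again non-adjacent. Every fibre has $2^{|X|}\ge 2$ points. So for $\varepsilon=1$ your map complements all cross-fibre pairs and all loops but preserves every same-fibre pair, and it is a morphism of $B_X$ of neither kind.

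Your fallback does not repair this. Under $\theta_{\sigma,F}$ both $\chi(x)$ and $\xi(x)$ shift by $F(x,x)$, so $\chi(x)+\xi(x)$ is invariant. More generally, no symmetric rule $g(\chi(x),\xi(x))$ flips when both bits flip: that would require $g(1,0)=g(0,1)+1$, contradicting $g(1,0)=g(0,1)$.

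The underlying obstruction is this. A complementing morphism of a graph with loops can neither fix a vertex (its loop would flip) nor fix a pair setwise, so all its cycles have length divisible by $4$. Each fibre must therefore carry a self-complementary looped structure on which the complementing maps act in this way. Coordinate permutations plus translations of $\mathbb Z_2^X$ do not provide it.

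Here is a concrete instance. Take $V(A)=X=\{x<y<z\}$ with $x$ looped and $y$ unlooped, the partial complementing map $x\mapsto y$, $\sigma=(x\,y)$, and $y<z$ in your completion order. Your rules give $F(z,x)=F(z,y)=F(z,z)=1$. Then $\theta_{\sigma,F}$ acts on the fibre over $z$ by a fixed-point-free involution $T$. It maps each pair $\{(z,\chi),(z,T\chi)\}$ onto itself, so no choice of adjacency inside fibres can make it complementing.

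The missing ingredient is the gadget from the paper. Add a coordinate $a\in P$, where $P$ is the path with edges $01,12,23$ and loops at $1$ and $2$. Let loops and same-fibre adjacency be given by $E_P$. A morphism then acts on this coordinate fibre by fibre:
\begin{itemize}
\item by $\mathrm{id}$ or $\alpha_P=\sigma_P^2$ when $\varepsilon=0$;
\item by $\sigma_P=(0\,1\,3\,2)$ or $\sigma_P^{-1}$ when $\varepsilon=1$.
\end{itemize}
In each case choose the option sending $a_x=\mathbf 1_{xEx}$ to $a_{\sigma x}$; this is always possible.

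Once loops live in $P$, the diagonal of $F$ becomes irrelevant. Your cross-fibre argument, with a non-symmetric $F$ satisfying $F(x,y)+F(y,x)=\varepsilon$, then goes through essentially as written. It is a repackaging of the paper's maps $s$ (the tournament matrix $\mathbf 1_{x<y}$), $\widehat\pi$ and $\gamma_{ij}$.
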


\begin{proof}[Proof sketch]
Let $A=(V,E)$ and fix a linear order on $V$.  Let $P$ be the graph on
$\{0,1,2,3\}$ consisting of the path $0-1-2-3$ together with loops at $1$ and
$2$.  The permutation
\[
 \sigma_P=(0\,1\,3\,2)
\]
is complementing on $P$, while
$\alpha_P=\sigma_P^2=(0\,3)(1\,2)$ is an automorphism.

Let $B$ have vertices
\[
 (x,a,\lambda),\qquad x\in V,\quad a\in P,\quad
 \lambda\colon V\setminus\{x\}\to\mathbb Z_2,
\]
and put, for $u=(x,a,\lambda)$ and $v=(y,b,\mu)$,
\begin{equation}
 uEv
 \quad\Longleftrightarrow\quad
 \begin{cases}
   aE_Pb,&x=y,\\
   \lambda(y)\ne\mu(x),&x\ne y.
 \end{cases}
\label{eq:looped-adjacency}
\end{equation}
Embed $A$ by
\[
 x\longmapsto (x,a_x,\lambda_x),\qquad
 a_x=\mathbf1_{xEx},\qquad
 \lambda_x(y)=\mathbf1_{\{y<x\text{ and }xEy\}}.
\]

We will need four kinds of morphisms of $B$.  Define
\[
 s(x,a,\lambda)=(x,\sigma_P(a),\lambda'), \text{ where }
 \lambda'(y)=\lambda(y)+\mathbf1_{x<y};
\]
this is complementing.  Every $\pi\in\Sym(V)$ induces an automorphism
$\widehat\pi(x,a,\lambda)=(\pi x,a,\lambda^\pi)$, where
$\lambda^\pi(\pi y)=\lambda(y)$.  For $i\in V$, let $\beta_i$ send $(i,a,\lambda)$ to
$(i,\alpha_P(a),\lambda)$ and fix all vertices whose first coordinate
is different from $i$.  For distinct $i,j\in V$, let $\gamma_{ij}$
change $\lambda(j)$ to $\lambda(j)+1$ in every vertex with first
coordinate $i$, and change $\lambda(i)$ to $\lambda(i)+1$ in every
vertex with first coordinate $j$, leaving all other entries unchanged.
The functions $\beta_i$ and $\gamma_{ij}$ are automorphisms, and the maps within
each family commute.  These facts follow directly from \eqref{eq:looped-adjacency}.

Let $p$ be a partial morphism of the embedded copy of $A$, and let
$\varepsilon\in\mathbb Z_2$ record whether it preserves or complements $E$.  Extend
the induced partial permutation of the first coordinates to
$\pi\in\Sym(V)$ and start with
\[
 h_0=\widehat\pi\circ s^\varepsilon.
\]
The map $h_0$ is complementing if and only if $p$ is,
and agrees with $p$ on the first coordinates.  Since $p$ and $h_0$ have
the same effect on loops, suitable $\beta_i$ correct the second
coordinates, giving a morphism $h_1$ which agrees with $p$ in its first
two coordinates.

It remains to correct the valuations.  Let $R\subseteq V$ consist of
the first coordinates of the vertices in the range of $p$.  Since $h_1$
and $p$ agree on first coordinates, for each $i\in R$ we can compare
the valuation $\mu_i$ of the corresponding image under $h_1$ with the
valuation $\nu_i$ of its image under $p$.  For distinct $i,j\in R$,
the two morphisms give the same adjacency between these images, because
they both preserve or both complement $E$.  By \eqref{eq:looped-adjacency},
\[
 \mu_i(j)+\nu_i(j)=\mu_j(i)+\nu_j(i).
\]
Thus, if both $i,j\in R$, the required changes to these two entries
agree, exactly as needed for $\gamma_{ij}$.  If only one of $i,j$ belongs
to $R$, the change in the other coordinate does not matter.  Apply
$\gamma_{ij}$ for each pair where a change is needed.  Since these maps
commute, their product corrects every specified valuation without changing
the first two coordinates.  Composing it with $h_1$ extends $p$.
\end{proof}

The complementing graph reduct thus has two natural incomparable expansions with EPPA:
ordinary graphs, and graphs with loops modulo simultaneous complementation of
edges and loops.  Viewed as first-order reducts of the random graph with a generic unary
predicate, their meet is the complementing graph reduct itself, which does
not have EPPA.  This also answers
the question about this meet raised in~\cite[Section~4.1]{BradleyEPPAnumbers}.\footnote{%
Let $(R,P)$ be the random graph with a generic unary predicate; view $P$
as the loop relation.  By uniqueness of the Fra\"{\i}ss\'e limit, there is a
permutation $\delta$ simultaneously complementing adjacency and $P$.
The automorphism groups of the two expansions are $\Aut(R)$ and
$\Aut(R,P)\cup\delta\Aut(R,P)$, respectively.  Since
$\Aut(R,P)\le\Aut(R)$, they generate
$\Aut(R)\cup\delta\Aut(R)$, precisely the automorphism group of
the graph modulo global complementation.}

\medskip

One might wonder whether Beliayeu's positive result is explained simply by
the absence of complementing partial morphisms with fixed points: these are
precisely what our obstruction in Proposition~\ref{prop:a6} uses.  But
forbidding fixed points is not enough.  In fact, $\cG^{\pm}$ fails even the
weaker extension property where we only consider partial automorphisms with
disjoint domain and range.  To see this, suppose that every structure in
$\cG^{\pm}$ had a finite extension in which all such partial automorphisms
extend.
Given $A\in\cG^{\pm}$, take two disjoint isomorphic copies $A_0,A_1$
of $A$ inside a structure $\widetilde A\in\cG^{\pm}$, and let
$\iota\colon A_0\to A_1$ be the natural isomorphism.  For any partial
automorphism $\varphi$ of $A_0$, both $\iota^{-1}$ and
$\iota\circ\varphi$ have disjoint domains and ranges.  Extending
both in a common witness for $\widetilde A$ and composing the extensions
would extend $\varphi$.  This would give EPPA for $\cG^{\pm}$,
contradicting Theorem~\ref{thm:complementing-graph}.

\subsection{Equivalence relations with finitely many classes}

Ivanov proved EPPA for equivalence relations on $r$-element subsets with no restriction on the number of equivalence classes~\cite{Ivanov2015}.  The negative argument of
Section~\ref{sec:complementing-graphs} extends directly to disprove EPPA
 when the number of classes is bounded.

\begin{proposition}\label{prop:bounded-equivalence}
For every $r,m\ge2$, the class of finite sets equipped with an equivalence
relation on $r$-element subsets having at most $m$ equivalence classes does not
have EPPA.  The same holds for equivalence relations on ordered $r$-tuples of
distinct elements.
\end{proposition}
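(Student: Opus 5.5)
The plan is to generalise the proof of Proposition~\ref{prop:a6}, using Lemma~\ref{lem:coset} with $b=r-1$ instead of $b=1$. Since ``at most $m$ classes'' with $m\ge2$ includes structures with exactly two classes, I will build a finite structure $A$ realising exactly two classes (colours $0,1$) and a family of partial automorphisms with no common witness. The structure has a core $S$ and $r$ extra points $w_1,\dots,w_r$. Fix $k$ with $2^k>r$, let $S=\mathbb Z_{2^k}$, and let $\sigma$ be the rotation $x\mapsto x+1$. For $1\le i\le r$, the partial map $p_i$ has domain $S\cup\{w_i\}$, acts as $\sigma$ on $S$ and fixes $w_i$. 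Every $p_i$ should swap the two colours, and all $p_i$ agree on $S$.

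\emph{Step 1 (the colouring).} Take an $s$-subset $Y\subseteq S$ with $1\le s<2^k$. Its stabiliser in $\langle\sigma\rangle\cong\mathbb Z_{2^k}$ is a proper subgroup, so its $\sigma$-orbit has length a power of $2$ that is at least $2$, hence even. On each $\sigma$-orbit of $r$-subsets of $S$ I colour alternately along the orbit, starting at a chosen representative. Then $c(\sigma X)=c(X)+1$. For the sets $R\cup\{w_i\}$ with $R$ an $(r-1)$-subset of $S$, I again colour alternately along each $\sigma$-orbit of such $R$. Here the colour of the starting representative is a free bit for each pair (orbit, $i$). If $r=1$ the $w_i$ carry singleton classes and need separate treatment; but for $r\ge2$ there are at least two orbits of $(r-1)$-sets once $k$ is large. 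Taking $k$ large gives many orbits, so I can choose the starting bits so that for every $i\ne j$ some $(r-1)$-subset $R_{ij}\subseteq S$ has $c(R_{ij}\cup\{w_i\})\ne c(R_{ij}\cup\{w_j\})$. Sets containing two or more $w$'s are coloured arbitrarily. Each $p_i$ is then a partial automorphism that complements the two colours. For ordered $r$-tuples the same construction works with $\sigma$ acting on tuples of distinct elements, since those orbits are also even. I expect checking this colouring step carefully, including the degenerate small cases, to be the main obstacle.

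\emph{Step 2 (group argument).} Suppose $B$ is a witness with extensions $g_i\in\Aut(B)$ of the $p_i$. Let $L=\Aut(B)_{(S)}$. Every element of $L$ fixes every $r$-subset of $S$, so it fixes both classes realised on $S$. The map $g_1$ preserves $S$ setwise, hence normalises $L$. Moreover $g_i g_1^{-1}\in L$, so $g_i\in Lg_1$ for all $i$. Every element of $Lg_1$ induces on classes the same permutation as $g_1$ on the two classes realised in $S$, namely their swap. So no element of $Lg_1$ fixes $r$ points, since the $r$-set they span would be an invariant set whose class must move. Thus each element of $Lg_1$ fixes at most $r-1$ points. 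Lemma~\ref{lem:coset} with $b=r-1$ shows that the $r$ points $w_1,\dots,w_r$, each fixed by some $g_i\in Lg_1$, lie in at most $r-1$ orbits of $L$.

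\emph{Step 3 (contradiction).} By pigeonhole there are $i\ne j$ and $\ell\in L$ with $\ell w_i=w_j$. Since $\ell$ fixes $S$ pointwise and preserves the class of the $r$-sets inside $S$, it preserves the two colours. Hence $c(R_{ij}\cup\{w_i\})=c(\ell(R_{ij}\cup\{w_i\}))=c(R_{ij}\cup\{w_j\})$, contradicting the choice made in Step 1. If $B$ realises more than two classes nothing changes: only the classes realised on $S$ and on the sets $R_{ij}\cup\{w_i\}$ enter the argument.
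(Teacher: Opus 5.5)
For $m=2$ your argument is correct and is essentially the paper's: your $\mathbb Z_{2^k}$ and the paper's $\mathbb Z_{2\operatorname{lcm}(1,\dots,r)}$ both serve only to make orbit lengths even. For $m\ge3$, however, Step~2 fails, and your closing sentence (``If $B$ realises more than two classes nothing changes'') is exactly the false step. An $r$-set of points fixed by $h\in Lg_1$ is $h$-invariant, so $h$ must fix its class. To bound fixed points by $r-1$ you therefore need $h$ to fix \emph{no} class of $B$. You only know that $h$ swaps the two classes realised on $S$. If $B$ has a third class, $g_1$ may fix it, and then elements of $Lg_1$ can fix many points, provided all $r$-sets among them lie in that class. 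In that case Lemma~\ref{lem:coset} gives nothing.

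This is a failure of the construction itself, not just of the write-up. For $r=2$ and any $m\ge3$, your $p_1,p_2$ have a common extension. Let $B=S\cup\{w_1,w_1',w_2,w_2'\}$. Let $g_1$ act as $\sigma$ on $S$, fix $w_1,w_1'$ and swap $w_2,w_2'$. Let $g_2$ act as $\sigma$ on $S$, fix $w_2,w_2'$ and swap $w_1,w_1'$. Keep the colours of $A$ and give each $w_i'$ the same colours towards $S$ as $w_i$. Colour $w_1w_1'$ and $w_2w_2'$ with a third colour $2$, and put $c(w_1'w_2')=c(w_1w_2)$ and $c(w_1w_2')=c(w_1'w_2)=1-c(w_1w_2)$. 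A direct check shows that $g_1$ and $g_2$ exchange colours $0$ and $1$ and fix colour $2$. So they are automorphisms of $B$, which has three classes, and they extend $p_1$ and $p_2$.

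The missing idea is that $S$ must realise all $m$ classes, with the rotation permuting them as an $m$-cycle. Then every class of $B$ is realised on $S$, every element of $Lg_1$ moves every class, and the bound $b=r-1$ is justified. This needs colourings $\binom Sr\to\mathbb Z_m$ and $\binom S{r-1}\to\mathbb Z_m$ that increase by $1$ under rotation, and hence orbit lengths divisible by $m$. That is why the paper uses $S=\mathbb Z_N$ with $N=m\operatorname{lcm}(1,\dots,r)$, and $\mathbb Z_{mr}$ for ordered tuples, where translation acts freely. With that change, your Steps~2 and~3 go through unchanged. A minor slip: for $r=2$ there is only one orbit of singletons however large $k$ is, but its two starting colours already separate $w_1$ from $w_2$.
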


\begin{proof}
We give the argument for $r$-element subsets.  For a finite set $V$,
we regard an equivalence relation on $\binom Vr$ as a structure with
underlying set $V$, not $\binom Vr$: a partial automorphism is a
partial bijection of $V$ whose induced action on $r$-subsets preserves
the equivalence relation.  As in Section~\ref{sec:complementing-graphs},
we represent an equivalence relation on $\binom Vr$ by a function
$f\colon\binom Vr\to\mathbb Z_m$: two subsets are equivalent precisely when
they have the same value.  We call the values colours; partial automorphisms
may permute them, since only equality and inequality of values matter.
We generalise Proposition~\ref{prop:a6} by constructing a cyclic
permutation $p$ of a set $S$ which also cyclically permutes the $m$ colours.
We then add $r$ vertices, each individually fixed by a partial automorphism
extending $p$, to apply Lemma~\ref{lem:coset} with $b=r-1$.
For $r=m=2$, suitable choices give precisely $A_6$.

Put $M=\operatorname{lcm}(1,\ldots,r)$, $N=mM$, $S=\mathbb Z_N$, and
let $p\colon S\to S$ be the translation $p(x)=x+1$.  We also write
$pY=\{p(y):y\in Y\}$ for its induced action on subsets.  We want to colour
the $r$- and $(r-1)$-subsets of $S$ so that $p$ increases their colours
by $1$ modulo $m$.  Let $Y\subseteq S$ have size $t\in\{r-1,r\}$, and
let $q$ be the size of its orbit under the induced action of $p$.  Then
$q\mid N$ and $p^qY=Y$.  Since $p$ is an $N$-cycle on $S$, its power
$p^q$ consists of cycles of length $\frac{N}{q}$ on $S$, and hence $Y$
is a union of such cycles, so $\frac{N}{q}\mid t$.  Consequently $\frac{N}{q}\mid M$
and hence $m\mid q$.  We can therefore colour each translation orbit
by repeating $0,1,\ldots,m-1$, obtaining colourings
$c\colon\binom Sr\to\mathbb Z_m$ and
$\alpha\colon\binom S{r-1}\to\mathbb Z_m$ which both increase by $1$ under $p$.
Choose $r$ pairwise distinct such colourings of $(r-1)$-subsets,
$\alpha_1,\ldots,\alpha_r$.  For $r=2$ take $\alpha,\alpha+1$.
For $r\ge3$, there are at least $r-1$ translation orbits on $\binom S{r-1}$.
Indeed, since $N\ge 2r$, for every $j=0,\ldots,r-2$ we can choose an
$(r-1)$-subset containing exactly $j$ pairs of consecutive elements of
$\mathbb Z_N$.  Translation preserves this number, so these subsets belong
to distinct orbits.  Choose $r-1$ such orbits $O_1,\ldots,O_{r-1}$ and take
$\alpha$ and $\alpha+\mathbf1_{O_i}$.

Let $V=S\cup\{a_1,\ldots,a_r\}$, where the $a_i$ are new vertices, and
define $f\colon\binom Vr\to\mathbb Z_m$ by $f(Y)=c(Y)$ for
$Y\in\binom Sr$ and $f(\{a_i\}\cup Y)=\alpha_i(Y)$ for every $i$ and
every $Y\in\binom S{r-1}$.  Assign arbitrary values to the remaining
$r$-subsets, and let $A$ be the resulting equivalence structure.

The function $c$ uses all $m$ values.  For each $i$, put
$p_i=p\cup\{a_i\mapsto a_i\}$, a bijection of $S\cup\{a_i\}$.
Every $r$-subset $Y$ of this domain satisfies
$f(p_iY)=f(Y)+1$, so $p_i$ is a partial automorphism of $A$.
Suppose that $B$ is an EPPA-witness and choose automorphisms
$g_i\in\Aut(B)$ extending $p_i$.  Put $L=\Aut(B)_{(S)}$, the pointwise
stabiliser of $S$ in the action on vertices.  Since $S$ realises all
$m$ classes, each element of $L$ fixes every class.  As $g_i|_S=p$
for every $i$, the permutation $g_i g_1^{-1}$ fixes $S$ pointwise and
thus belongs to $L$.  Hence all $g_i\in Lg_1$, and $g_1$ normalises $L$
because $g_1(S)=S$.  Every element of $Lg_1$ permutes the $m$ classes
by adding $1$ to their colours, so it fixes at most $r-1$ vertices.
By Lemma~\ref{lem:coset} with $b=r-1$, fixed points of elements of
$Lg_1$ lie in at most $r-1$ $L$-orbits.  Since $g_i$ fixes $a_i$,
two of the $a_i$ are $L$-conjugate.  If $\ell(a_i)=a_j$ for some
$\ell\in L$, then
$f(\{a_i\}\cup Y)=f(\{a_j\}\cup Y)$ for every
$Y\in\binom S{r-1}$, because $\ell$ fixes $S$ pointwise and preserves
each equivalence class.  Hence $\alpha_i=\alpha_j$, a contradiction.

For ordered injective tuples take instead $S=\mathbb Z_{mr}$.  Translation
acts freely on ordered tuples of lengths $r$ and $r-1$, so choose
colourings $c$ and $\alpha$ which increase by $1$ under translation.
For an external vertex $a_i$, colour a tuple with $a_i$ in position $j$
by $\alpha$ of the remaining ordered tuple plus $\mathbf1_{i=j}$
modulo $m$.  These $r$ types are distinct, since the types for $a_i$ and $a_j$ differ
on tuples with the external vertex in position $i$ whenever $i\ne j$.
Moreover, each $p\cup\{a_i\mapsto a_i\}$ increases all defined colours by
$1$, so the same coset argument applies.
\end{proof}

\subsection{Reducts of random hypergraphs}

Taking $m=2$ in Proposition~\ref{prop:bounded-equivalence} shows that finite
$r$-uniform hypergraphs modulo global complementation do not have EPPA for
every $r\ge2$.  This is one of the negative cases in the higher-arity analogue
of Theorem~\ref{thm:classification}.  Thomas classified the reducts of the
countable random $r$-uniform hypergraph~\cite{thomas1996reducts}, and in
forthcoming work we give their complete EPPA classification.  For every
$r\ge2$, the only positive cases are the pure set, the random $r$-uniform
hypergraph itself, and one natural switching reduct, which can be
represented by $(r+1)$-uniform hypergraphs in which every $(r+2)$-element set
contains an even number of hyperedges; for $r=2$ this is precisely the class
of two-graphs.  Thus Theorem~\ref{thm:classification} is the first non-trivial
instance of the corresponding higher-arity picture.

\bigskip

\paragraph{\textbf{Formal verification.}} Theorem~\ref{thm:classification} and Proposition~\ref{prop:bounded-equivalence} have been formalised in Lean~4; see \url{https://github.com/shulda/lean_eppa_reducts}. The Lean formalisation was developed almost entirely autonomously by AI, primarily as an additional check on our proofs, without aiming to meet the usual standards of style and reusability in formalised mathematics.

\medskip

\paragraph{\textbf{AI disclaimer.}}
This work was developed with iterative use of ChatGPT (GPT-5.6 Sol and GPT-6),
both in the mathematical research and in drafting and editing the manuscript.
The authors have directed, reviewed and verified the work in full and take responsibility
for all its contents.

\bibliographystyle{alpha}
\bibliography{ramsey}

\end{document}